\newtheorem{theorem}{Theorem}[section]
\newtheorem{proposition}[theorem]{Proposition}
\newtheorem{corollary}[theorem]{Corollary}
\theoremstyle{definition}
\theoremstyle{remark}
\newtheorem{remark}[theorem]{Remark}
\numberwithin{equation}{section}
\begin{document}
\title[Ideal Generation from Vanishing Theorem]{Skoda's Ideal Generation from Vanishing Theorem for Semipositive Nakano Curvature and Cauchy-Schwarz Inequality for Tensors}
\author[Y-T. Siu]{Yum-Tong Siu}
\address{Department of Mathematics, Harvard University}
\email{siu@math.harvard.edu}
\date{January 2018}
\dedicatory{Dedicated to Lawrence Ein}
\subjclass[2010]{32F32}
\begin{abstract}The Bochner-Kodaira technique of completion of squares yields vanishing theorems and $L^2$ estimates of $\bar\partial$.  Skoda's ideal generation, which is a crucial ingredient in the analytic approach to the finite generation of the canonical ring and the abundance conjecture, requires specially tailored analytic techniques for its proof.  We introduce a new method of deriving Skoda's ideal generation which makes it and its formulation a natural consequence of the standard techniques of vanishing theorems and $L^2$ estimates of $\bar\partial$. Our method of derivation readily gives also other similar results on ideal generation.  An essential r\^ole is played by one particular Cauchy-Schwarz inequality for tensors with a special factor which accounts for the exponent of the denominator in the formulation of the integral condition for Skoda's ideal generation.\end{abstract}
\maketitle

\section*{Introduction} Since the nineteen sixties the Bochner-Kodaira technique of completion of squares, for manifolds either compact or with pseudoconvex boundary condition, has been a most important tool in algebraic and complex geometry. Sometimes ingenious {\it ad hoc} adaptations are required in its use.  A remarkable example is the 1972 result of Skoda on ideal generation (Th\'eor\`eme 1 on pp.555--556 of \cite{Skoda1972}), which is a crucial ingredient in the analytic approach to the finite generation of the canonical ring and the abundance conjecture.  Up to this point, special analytic techniques developed by Skoda, other than applications of the usual vanishing theorems and $L^2$ estimates for the $\bar\partial$-equation, are required for its proof.

In recent years analytic results from vanishing theorems and solvability of $\bar\partial$-equation have contributed to the solution of a number of longstanding open problems in algebraic geometry.  The interaction between algebraic geometry and analytic methods in several complex variables and partial differential equations has been a very active and productive area of investigation.  For such an interaction it is advantageous to minimize the use of {\it ad hoc} analytic methods in favor of approaches which are more amenable to adaptation to formulations in algebraic geometry.

In this note we give a simpler, more straightforward proof of Skoda's result on ideal generation which makes it and its formulation a natural consequence of the standard techniques in vanishing theorems and solving $\bar\partial$-equation with $L^2$ estimates. Our proof
readily gives other similar results on ideal generation.  In \S\ref{section6:VariantsOfSkodasIdealGeneration} we present a number of such similar results. Our proof uses the following three ingredients.

\subsection*{}(I) One particular Cauchy-Schwarz inequality for tensors
$$
\left|\left<{\mathbf S},\,{\mathbf T}\right>_{{\mathbb C}^r\otimes{\mathbb C}^n}\right|^2
\leq\min(r,n)\left\|\left<{\mathbf S},\,{\mathbf T}\right>_{{\mathbb C}^n}\right\|_{{\mathbb C}^r\otimes{\mathbb C}^r}^2
$$
for elements ${\mathbf S}$ and ${\mathbf T}$ of ${\mathbb C}^r\otimes{\mathbb C}^n$ with the special factor $\min(r,n)$. Detailed statement and proof will be given in \S\ref{section2:Cauchy-SchwarzInequalityForTensors}. The usual Cauchy-Schwarz inequality is the special case
of $n=1$.  Our particular Cauchy-Schwarz inequality for tensors {\it with the factor $\min(r,n)$ on the right-hand side} is needed in order to handle the curvature operator condition for nonnegative Nakano curvature.  Just like the usual Cauchy-Schwarz inequality, it is derived by a straightforward simple reduction to certain special orthonormal situation.  The significance of our Cauchy-Schwarz inequality is the factor $\min(r,n)$ which plays an essential r\^ole the exponent of the denominator in the formulation of the integral condition for Skoda's ideal generation.  Though ${\mathbb C}^r\otimes{\mathbb C}^n$ can be identified with ${\mathbb C}$-linear operators between the vector spaces ${\mathbb C}^r$ and ${\mathbb C}^n$ and there are generalizations of the usual Cauchy-Schwarz inequality to bounded ${\mathbb C}$-linear operators between Hilbert spaces, yet the nature of existent generalizations is different and the essential factor $\min(r,n)$ does not occur readily in them without repeating the kind of arguments used in deriving our Cauchy-Schwarz inequality (see Remark \ref{remark:2.3}).  Our Cauchy-Schwarz inequality for tensors should be a simple case of more general inequalities of Cauchy-Schwarz type for general tensors with special factors (obtained by reduction to orthonormal situations in Young tableau).  In \S\ref{section7:ComparisonWithOriginalProofOfSkodasIdealGeneration} where the manipulations in computation between our proof and Skoda's original proof are compared, one such inequality of Cauchy-Schwarz type for more general tensors is given.

\subsection*{}(II) Nonnegativity of the Nakano curvature of the metric for the kernel bundle of $(g_1,\cdots,g_p):\left({\mathcal O}_\Omega\right)^{\oplus p}\to {\mathcal O}_\Omega$ which is induced from the standard flat metric of $\left({\mathcal O}_\Omega\right)^{\oplus p}$ and twisted by the weight function $\left(\sum_{j=1}^q|g_j|^2\right)^{-q}$, where $\Omega$ is a domain in ${\mathbb C}^n$ and $q=\min(n,p-1)$.  This is verified by a simple straightforward computation in normal fiber coordinates at the point under consideration and the use of the above Cauchy-Schwarz inequality for tensors.   Detailed statement and proof will be given in \S\ref{section3:NakanoCurvatureOfBundleAndTwistingByMetricOfTrivialLineBundle}.

\subsection*{}(III) Vanishing theorem (from solving a $\bar\partial$-equation with $L^2$ estimate) for a holomorphic vector bundle $V$ with nonnegative Nakano curvature $\Theta$ on a strictly pseudoconvex domain $\Omega$ in an ambient complex manifold of complex dimension $n$ for smooth $\bar\partial$-closed $V$-valued $(n,1)$-form $F$ with finite $\left(\Theta^{-1}F,\,F\right)_{L^2(\Omega)}$, where $\Theta^{-1}$ is defined as the limit, as the positive number $\varepsilon$ approaches $0$, of the inverse of $\Theta+\varepsilon I$ with $I$ being the identity operator.  Actually what is needed is only the special case where $X$ is a Stein domain spread over ${\mathbb C}^n$ where the  canonical line bundle $K_X$ is trivial and $F$ can be replaced by a smooth $\bar\partial$-closed $V$-valued $(0,1)$-form.  Detailed statement and proof will be given in \S\ref{section4:VanishingTheoremAndSolutionOfDBarEquationForNonnegativeNakanoCurvature}.  Usually an application of the technique of vanishing theorems to produce holomorphic sections indispensably requires strict positivity of curvature at least at some point.  The special feature of the application here of the vanishing theorem from solving a $\bar\partial$-equation with $L^2$ estimate requires only semipositive Nakano curvature and {\it no strict positivity at any point}. The difficulty of producing holomorphic sections by solving $\bar\partial$-equation with $L^2$ estimate without positivity of curvature at any point is that it is not possible to get a usable right-hand side for the $\bar\partial$-equation by applying $\bar\partial$ to the product of a cut-off function and a local holomorphic section.  For the problem of ideal generation, the nonnegative curvature condition is sufficient for the application, because there is a natural usable choice for the right-hand side of the $\bar\partial$-equation.  Another known result requiring only nonnegative curvature condition is the extension result of Ohsawa-Takegoshi.
Applications to produce holomorphic sections from only nonnegative curvature condition remain mostly an area yet to be explored.

\subsection*{\it Notations.} The set of all positive integers is denoted by ${\mathbb N}$.  We will use the notation ${\mathcal O}_X$ to denote the structure sheaf of $X$.  To minimize the use of notations we will loosely refer to the locally free sheaf associated to a holomorphic vector bundle simply as a holomorphic vector bundle and vice versa.  For example, we use the notation $\left({\mathcal O}_{{\mathbb C}^n}\right)^{\oplus p}$ to denote both the globally trivial vector bundle of rank $p$ over ${\mathbb C}^n$ and the direct sum of $p$ copies of the structure sheaf ${\mathcal O}_{{\mathbb C}^n}$ of ${\mathbb C}^n$.  We use $\Gamma(Y,W)$ (or $\Gamma(Y,{\mathcal F})$) to denote the space of holomorphic sections of a bundle $W$ (or a coherent sheaf ${\mathcal F}$) over $Y$.

For a holomorphic vector bundle $V$ of rank $r$ with a metric $H_{j\bar k}$ over a complex manifold $X$ of complex dimension $n$ with a K\"ahler metric $g_{\lambda\bar\nu}$, we will use the Latin letters $j,k,i,\ell$ etc. for the indices of the fiber coordinates of $V$ and use the Greek letters $\lambda,\nu,\mu,\rho$ etc. for the local coordinates $z^1,\cdots,z^n$ of $X$.  We use $g^{\bar\nu\lambda}$ to mean the inverse of $g_{\lambda\bar\nu}$ and use $H^{\bar kj}$ to mean the inverse of $H_{j\bar k}$.  Unless there is a possible confusion, without explicit mention we will use $H^{\bar kj}$ and $g^{\bar\nu\lambda}$ to raise the subscripts of tensors and use $H_{j\bar k}$ and $g_{\lambda\bar\nu}$ to lower the superscripts of tensors.  The notation $T_X$ denotes the (holomorphic) tangent bundle of $X$.  The notation $K_X$ denotes the canonical line bundle of $X$.

For a holomorphic subbundle $W$ of $V$, when we use the metric of $W$ which is induced from the metric $H_{j\bar k}$ of $V$, we simply say the metric $H_{j\bar k}$ of $W$ instead of introducing a new notation.

When we refer to a positive $(1,1)$-form $\sqrt{-1}\sum_{1\leq\lambda,\nu\leq n}\eta_{\lambda\bar\nu}dz^\lambda\wedge d\overline{z^\nu}$, we will drop the factor $\sqrt{-1}$ and simply say that the $(1,1)$-form $\eta_{\lambda\bar\nu}$ is positive.

The notation $\partial_\lambda$ means $\frac{\partial}{\partial z^\lambda}$ and the notation $\partial_{\bar\lambda}$ means $\frac{\partial}{\partial\overline{z^\lambda}}$. The notation $\left(\cdot,\,\cdot\right)$ or $\left<\cdot,\,\cdot\right>$ means the inner product and the notation $\left\|\cdot\right\|$ means the norm, sometimes with a subscript to indicate in which space the inner product or the norm is taken.

The study of vanishing theorems and $L^2$ estimates for the $\bar\partial$-equation has a very long history and a very extensive literature.  To avoid a long bibliography which nowadays can easily be fetched from readily available online searchable databases, we keep to a minimum the listing of references at the end of this note.

\section{Algebraic Formulation and Application of Skoda's Ideal Generation}\label{section1:AlgebraicFormulationAndApplicationOfSkodasIdealGeneration}

Though this is not part of our proof of Skoda's result on ideal generation, to draw attention to its use in algebraic geometric problems, we give here the algebraic geometric formulation of
Skoda's result and how it is used in the analytic approach to the finite generation of the canonical ring before we present our proof.

We state in the following way a trivially more general form of Skoda's result than is given in Th\'eor\`eme 1 on pp.555--556 of \cite{Skoda1972} by using a Stein domain spread over ${\mathbb C}^n$ instead of a Stein domain in ${\mathbb C}^n$, in order to be able to formulate it in an algebraic geometric setting.

\begin{theorem}[Skoda's Theorem on Ideal Generation]\label{theorem:1.1}
Let $\Omega $ be a domain spread over ${\mathbb C}^n$ which is
Stein. Let $\psi$ be a plurisubharmonic function on $\Omega$,
$g_{1},\ldots ,g_{p}$ be holomorphic functions on $\Omega$ (not all identically zero), $\alpha
>1$, $q=\min \left(n,p-1\right)$, and $|g|^2=\sum_{j=1}^p\left| g_j\right|^2$.
Then for any holomorphic function $f$ on $\Omega$ with
$$\int_{\Omega }\frac{\left\vert f\right|^{2}e^{-\psi }}{|g|^{2(\alpha q+1)}}<\infty,$$
there exist holomorphic functions $h_{1},\ldots ,h_{p}$ on
$\Omega$ with $f=\sum_{j=1}^{p} h_jg_j$ on $\Omega$ such that
$$\int_{\Omega }\frac{\sum_{k=1}^p\left|h_{k}\right|^2e^{-\psi
}}{|g|^{2\alpha q}}\leq
\frac{\alpha }{\alpha -1}\int_{\Omega
}\frac{\left|f\right|^2e^{-\psi }}{|g|^{2(\alpha q+1)}}
$$
holds.
\end{theorem}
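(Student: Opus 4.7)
The plan is to realize $(h_1,\ldots,h_p)$ as $h = \tilde h - v$, where $\tilde h$ is a natural smooth preimage of $f$ under $(g_1,\ldots,g_p)$ and $v$ is an $L^2$ correction in the kernel bundle obtained from a $\bar\partial$-equation. On $\Omega\setminus Z(g)$ I set $\tilde h_j = \bar g_j f/|g|^2$, so that $\sum_j g_j\tilde h_j = f$ and $\sum_j|\tilde h_j|^2 = |f|^2/|g|^2$, with $\tilde h$ lying pointwise in the orthogonal complement $W^\perp$ of the kernel bundle $W\subset(\mathcal{O}_\Omega)^{\oplus p}$ of $(g_1,\ldots,g_p)$. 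Differentiating $\sum g_j\tilde h_j = f$ gives $\sum g_j\,\bar\partial\tilde h_j = 0$, so $\bar\partial\tilde h$ is a $W$-valued $(0,1)$-form, which stages the whole argument inside $W$.

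To bring in the three ingredients, I split $|g|^{-2\alpha q} = |g|^{-2q}\cdot|g|^{-2(\alpha-1)q}$ and set $\phi = \psi + (\alpha-1)q\log|g|^2$, which is plurisubharmonic because $\alpha>1$. I equip $W$ with the restriction of the flat metric on $(\mathcal{O}_\Omega)^{\oplus p}$, twisted by $|g|^{-2q}e^{-\phi}$; note that $e^{-\phi}|g|^{-2q} = e^{-\psi}|g|^{-2\alpha q}$, so the resulting norm on $(\mathcal{O}_\Omega)^{\oplus p}$ agrees with the weight of the theorem. By ingredient (II) the $|g|^{-2q}$ factor alone makes the Nakano curvature $\Theta$ of $W$ semipositive, and the plurisubharmonic twist by $\phi$ contributes the additional nonnegative summand $\partial\bar\partial\phi\otimes\mathrm{Id}_W$, whose piece $(\alpha-1)q\,\partial\bar\partial\log|g|^2\otimes\mathrm{Id}_W$ supplies the strict positivity precisely in the directions occupied by $\bar\partial\tilde h$. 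Since $\Omega$ is spread over $\mathbb{C}^n$ its canonical bundle is trivial, so ingredient (III), applied on a Stein exhaustion, produces $v\in L^2(W)$ with $\bar\partial v = \bar\partial\tilde h$ and the estimate $\int_\Omega|v|^2 e^{-\phi}|g|^{-2q} \leq \int_\Omega\langle\Theta^{-1}\bar\partial\tilde h,\bar\partial\tilde h\rangle e^{-\phi}|g|^{-2q}$. Setting $h = \tilde h - v$, we have $\bar\partial h = 0$ and $\sum g_j h_j = f$; moreover, pointwise orthogonality $\tilde h\perp v$ in $\mathbb{C}^p$ gives $|h|^2 = |\tilde h|^2 + |v|^2$, which splits the final $L^2$ norm cleanly.

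The main obstacle is bounding $\int\langle\Theta^{-1}\bar\partial\tilde h,\bar\partial\tilde h\rangle e^{-\phi}|g|^{-2q}$ by $\tfrac{1}{\alpha-1}\int|f|^2 e^{-\psi}|g|^{-2(\alpha q+1)}$: this is exactly where ingredient (I) is used. Writing $\bar\partial\tilde h$ explicitly in terms of the $g_j$ and $d\bar g_j$, the tensor Cauchy--Schwarz inequality with factor $\min(r,n) = q$ (applied with $r = p-1$ the generic rank of $W$ and $n = \dim\Omega$) yields the combinatorial constant $q$ when one contracts against the inverse of $(\alpha-1)q\,\partial\bar\partial\log|g|^2\otimes\mathrm{Id}_W$, and the two $q$'s cancel, leaving the pointwise bound $\tfrac{1}{\alpha-1}|f|^2/|g|^2$ together with the extra $|g|^{-2}$ that promotes the denominator exponent from $\alpha q$ to $\alpha q + 1$. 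Combined with $\|\tilde h\|^2 = \int|f|^2 e^{-\psi}|g|^{-2(\alpha q+1)}$, the pointwise Pythagorean identity gives $\|h\|^2 \leq \bigl(1 + \tfrac{1}{\alpha-1}\bigr)\int|f|^2 e^{-\psi}|g|^{-2(\alpha q+1)} = \tfrac{\alpha}{\alpha-1}\int|f|^2 e^{-\psi}|g|^{-2(\alpha q+1)}$, which is the sharp constant. A minor technical point is that $W$ fails to be locally free on the common zero set $Z(g)$; this is handled by working on Stein exhaustions of $\Omega\setminus Z(g)$ and using $L^2$ removable singularities for the resulting holomorphic $h$ across the analytic set $Z(g)$ before passing to the limit.
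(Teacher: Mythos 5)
Your proposal follows the same global strategy as the paper: the same smooth preimage $\tilde h_j = \overline{g_j}f/|g|^2$, the same kernel bundle $W={\mathcal K}$, the same twisted metric $e^{-\psi}|g|^{-2\alpha q}$ on $W$, the same appeal to the three ingredients (tensor Cauchy--Schwarz, semipositivity of the Nakano curvature of the twisted kernel metric, and the $L^2$-existence theorem on a pseudoconvex exhaustion of $\Omega\setminus Z$), and the same extension across $Z$ at the end. So this is essentially the paper's proof.

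That said, your treatment of the final constant is genuinely cleaner than what is written in the paper. You observe that $\tilde h$ lies pointwise in $W^\perp$ (since $\langle\tilde h, w\rangle = \tfrac{f}{|g|^2}\overline{\sum_j g_j w_j}=0$ for $w\in W$) while the correction $v$ is a section of $W$, so $|h|^2 = |\tilde h-v|^2 = |\tilde h|^2+|v|^2$ \emph{pointwise}, and integrating against the scalar weight gives $\|h\|^2 = \|\tilde h\|^2+\|v\|^2 \le \bigl(1+\tfrac{1}{\alpha-1}\bigr)\hat C = \tfrac{\alpha}{\alpha-1}\hat C$ with no slack. The paper's \S 5.1 instead invokes $|a-b|^2\le 2(|a|^2+|b|^2)$ and lands on $2\bigl(1+\tfrac{q}{\gamma}\bigr)\hat C$; \S 5.2 acknowledges the spurious factor of $2$ and sketches a removal via $L^2$-orthogonality of the minimal $\bar\partial$-solution to $\ker\bar\partial$, but does not carry it out. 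Your pointwise orthogonality observation is both more elementary and more direct, and actually closes the gap the paper leaves open.

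One small imprecision in your write-up worth flagging: you attribute the $q$ in the final estimate to the tensor Cauchy--Schwarz factor $\min(r,n)=q$ and say "the two $q$'s cancel." In the paper's bookkeeping, the tensor Cauchy--Schwarz is used once, inside Proposition \ref{proposition:3.4.3}, to show the Nakano curvature of $W$ with the $|g|^{-2(q+\gamma)}$ twist dominates $\gamma\,\partial\bar\partial\log|g|^2\otimes\mathrm{Id}$; the $q$ in the final bound $\|v\|^2\le\tfrac{q}{\gamma}\hat C$ then comes from the separate eigenvalue count $n_{\omega(\gamma)}\le\min(n,p-1)=q$ in Corollary \ref{corollary:4.3}, not from re-applying the Cauchy--Schwarz to the pointwise contraction. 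The two uses of $q$ do combine to give $\tfrac{q}{\gamma}=\tfrac{1}{\alpha-1}$, so your arithmetic is right, but the mechanism is slightly misstated. This does not affect correctness.
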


The following algebraic geometric formulation of Skoda's result is easily obtained by representing the compact complex algebraic manifold $X$ of complex dimension $n$ minus some complex hypersurface as a Stein domain over ${\mathbb C}$ and considering a holomorphic line bundle over $X$ minus some complex hypersurface as globally trivial by division by a meromorphic section of it.

\begin{theorem}[Algebraic Geometric Formulation of Ideal Generation]\label{theorem:1.2}
Let $X$ be a compact complex algebraic manifold of complex dimension $n$ and $L$ and $E$ be respectively
holomorphic line bundles on $X$ with (possibly singular) Hermitian metrics $e^{-\varphi_L}$ and $e^{-\varphi_E}$ such
that $\varphi_L$ and $\varphi_E$ are plurisubharmonic. Let $s_1,\cdots,s_p\in\Gamma(X,L)$ and $0<\gamma\leq 1$ and $q=\max(n,p-1)$.  Then
for any $s\in\Gamma(X,(q+2)L+E+K_X)$ with
$$
C_s:=\int_X\frac{|s|^2e^{-(1-\gamma)\varphi_L-\varphi_E}}{\left(\sum_{j=1}^p|s_j|^2\right)^{q+1+\gamma}}<\infty,
$$
there exist $h_1,\cdots,h_p\in\Gamma(X,((q+1)L+E+K_X)$ such that $s=\sum_{j=1}^p h_js_j$ with
$$
\int_X\frac{\sum_{k=1}^p\left|h_k\right|^2e^{-(1-\gamma)\varphi_L-\varphi_E}}{\left(\sum_{j=1}^p|s_j|^2\right)^{q+\gamma}}\leq\left(1+\frac{q}{\gamma}\right)C_s.
$$
\end{theorem}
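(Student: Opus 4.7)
The plan is to deduce Theorem~\ref{theorem:1.2} from Theorem~\ref{theorem:1.1} via the standard ``remove a hypersurface and trivialize'' procedure. First, choose a complex hypersurface $D\subset X$ large enough that $\Omega:=X\setminus D$ is Stein and admits an \'etale map to $\mathbb{C}^n$ (take $D$ to contain a hyperplane section in a projective embedding together with the branch locus of a generic linear projection), and large enough that each of $L$, $E$, $K_X$ admits a nowhere-zero meromorphic section ($\sigma_L$, $\sigma_E$, $\omega$ respectively) that is holomorphic on $\Omega$. Enlarge $D$ also to contain the common zero set of $s_1,\ldots,s_p$.

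Next, trivialize everything on $\Omega$. Set $g_j:=s_j/\sigma_L\in\mathcal O(\Omega)$ and $f:=s/(\sigma_L^{q+2}\sigma_E\omega)\in\mathcal O(\Omega)$. A tuple $(h_1,\ldots,h_p)$ of sections of $(q+1)L+E+K_X$ on $X$ satisfying $s=\sum h_j s_j$ corresponds under the same trivializations to functions $H_k:=h_k/(\sigma_L^{q+1}\sigma_E\omega)$ on $\Omega$ with $f=\sum H_k g_k$. Absorb the metrics $e^{-\varphi_L}$, $e^{-\varphi_E}$ and the trivializing factors $|\sigma_L|^2$, $|\sigma_E|^2$, $|\omega|^2$ into a single plurisubharmonic weight $\psi$ on $\Omega$, chosen so that the pointwise identities
$$
\frac{|f|^2\,e^{-\psi}}{|g|^{2(\alpha q+1)}}\,dV=\frac{|s|^2\,e^{-(1-\gamma)\varphi_L-\varphi_E}}{\bigl(\sum|s_j|^2\bigr)^{q+1+\gamma}},\qquad \frac{|H_k|^2\,e^{-\psi}}{|g|^{2\alpha q}}\,dV=\frac{|h_k|^2\,e^{-(1-\gamma)\varphi_L-\varphi_E}}{\bigl(\sum|s_j|^2\bigr)^{q+\gamma}}
$$
hold, where $dV$ is the Lebesgue form on $\mathbb{C}^n$ pulled back to $\Omega$. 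Matching exponents forces $\alpha q+1=q+1+\gamma$, i.e.\ $\alpha:=1+\gamma/q$, and then the Skoda constant becomes $\alpha/(\alpha-1)=1+q/\gamma$, exactly as claimed. Invoking Theorem~\ref{theorem:1.1} with this $\psi$ and $\alpha$ produces $H_k\in\mathcal O(\Omega)$ with $f=\sum H_k g_k$ and the required bound, which translates back to meromorphic sections $h_k$ of $(q+1)L+E+K_X$ on $X$, holomorphic on $\Omega$, satisfying both $s=\sum h_k s_k$ on $\Omega$ and the target integral estimate.

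The final step is to extend the $h_k$ across $D$. The $L^2$ bound, combined with the local boundedness of $e^{-(1-\gamma)\varphi_L-\varphi_E}$ away from the $-\infty$-loci of $\varphi_L$, $\varphi_E$ (which are themselves $L^2$-negligible), shows that each meromorphic section $h_k$ is locally $L^2$ near $D$ with respect to a smooth reference metric; by the standard removable-singularity theorem for $L^2$ meromorphic sections, $h_k$ extends holomorphically to all of $X$. The identity $s=\sum h_k s_k$ then extends to $X$ by analytic continuation, and the integral bound transfers without change.

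The main obstacle I anticipate is the careful bookkeeping of the middle paragraph: writing $\psi$ explicitly in terms of $\varphi_L$, $\varphi_E$ and the log-norms of the trivializing sections, and verifying the exact matching of exponents and constants so that the hypothesis and conclusion of Theorem~\ref{theorem:1.1} translate into precisely the form stated in Theorem~\ref{theorem:1.2}. A secondary technical issue is the treatment of the singularities of $\varphi_L$, $\varphi_E$ along $D$; this is handled in the usual way by first approximating the singular plurisubharmonic weights by decreasing sequences of smooth ones and then passing to the limit via monotone convergence together with a weak-compactness argument for the solutions.
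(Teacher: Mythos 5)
Your approach is exactly what the paper indicates (and leaves to the reader as a one-sentence remark): remove a suitable hypersurface $D$ so that $\Omega=X\setminus D$ is a Stein domain spread over $\mathbb{C}^n$, trivialize $L$, $E$, $K_X$ by nowhere-zero meromorphic sections holomorphic on $\Omega$, translate into Theorem~\ref{theorem:1.1}, and extend the resulting sections across $D$ by $L^2$ removability. Your exponent bookkeeping is also correct: with $\alpha q+1=q+1+\gamma$ one gets $\alpha=1+\gamma/q$, hence $\alpha>1$ and $\alpha/(\alpha-1)=1+q/\gamma$, and the associated weight $\psi=(1-\gamma)\bigl(\varphi_L-\log|\sigma_L|^2\bigr)+\bigl(\varphi_E-\log|\sigma_E|^2\bigr)$ is plurisubharmonic precisely because $0<\gamma\le 1$ and the $\log|\sigma_\bullet|^2$ are pluriharmonic on $\Omega$.

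Two small remarks. First, the statement of Theorem~\ref{theorem:1.2} as printed in the paper has $q=\max(n,p-1)$ whereas Theorem~\ref{theorem:1.1} (which you invoke) has $q=\min(n,p-1)$; your matching step implicitly assumes the two agree, which is the natural reading since everywhere else in the paper $q=\min(n,p-1)$ and the printed $\max$ is evidently a slip. Were one to take $\max$ literally, one would apply Theorem~\ref{theorem:1.1} with $q'=\min(n,p-1)$ and $\alpha=(\,\max(n,p-1)+\gamma)/q'>1$, yielding the constant $\bigl(\max+\gamma\bigr)/\bigl(\max+\gamma-q'\bigr)\le 1+\max/\gamma$, so the claimed bound would still hold; either way you are safe, but it is worth noting. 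Second, your closing paragraph about approximating the singular psh weights by smooth ones is superfluous here: Theorem~\ref{theorem:1.1} is stated for an arbitrary plurisubharmonic $\psi$, so you may invoke it directly with the singular $\psi$ you construct; the smoothing is an internal step in the paper's proof of Theorem~\ref{theorem:1.1}, not something needed again at this stage.
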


\begin{theorem}[Finite Generation of Section Module]  Let $L$ be a holomorphic line bundle over a compact complex algebraic manifold $X$ of complex dimension $n$.  Let $s^{(m)}_1,\cdots,s^{(m)}_{q_m}$ be a ${\mathbb C}$-basis of $\Gamma(X,mL)$.  Let $N_0$ and $m_0$ be positive integers such that $N_0\geq m_0(n+2)$ and $\Gamma(X,m_0L)\not=0$.  Let $\varepsilon_m$ (for $m\in{\mathbb N}$) be a sequence of positive numbers which decrease rapidly enough to yield the convergence of
the infinite series
$$
\Phi:=\sum_{m=1}^\infty\varepsilon_m\sum_{j=1}^{q_m}\left|s_j^{(m)}\right|^{\frac{2}{m}}
$$
on $X$.  Suppose there exist elements $s^{(m_0)}_1,\cdots,s^{(m_0)}_{q_{m_0}}$ of $\Gamma(X,m_0L)$ such that for any $m\geq N_0$ and for any $s^{(m)}\in\Gamma(X, mL+K_X)$ the (locally defined) function
\begin{equation}\label{eq:1.3.1FiniteGenerationOfSectionModule1}
\frac{\left|s^{(m)}\right|^2}{\Phi^{{}^{m-(n+2)m_0}}\left(\sum_{j=1}^{q_{m_0}}\left|s^{(m_0)}_j\right|^2\right)^{n+2}}
\end{equation}
is locally bounded on $X$.  Then for $m\geq N_0$,
$$
\Gamma(X,mL+K_X)=\sum_{\nu_1+\cdots+\nu_{q_{m_0}}=\ell_m}\left(s^{(m_0)}_1\right)^{\nu_1}\cdots\left(s^{(m_0)}_{q_{m_0}}\right)^{\nu_{q_{m_0}}}\Gamma\left((m-m_0\ell_m)L+K_X\right),
$$
where $\ell_m$ the largest integer with $m-\ell_m m_0\geq N_0$.

In particular, if one denotes by $R(X,F)$ the ring $\bigoplus_{k=1}^\infty\Gamma(X,kF)$ of all global holomorphic sections of positive tensor powers of a line bundle $F$ over $X$, then the module
$\bigoplus_{m=1}^\infty\Gamma(X,mL+K_X)$ is generated by a finite number of elements over the ring $R(X,m_0L)$ and the finite number of elements can be taken
to be a ${\mathbb C}$-basis of the finite-dimensional ${\mathbb C}$-vector space $\bigoplus_{m=1}^{N_0}\Gamma(X,mL+K_X)$.  In other words,
$$\bigoplus_{m=1}^\infty\Gamma(X,mL+K_X)=R(X,m_0L)\left(\bigoplus_{m=1}^{N_0}\Gamma(X,mL+K_X)\right).$$

In the special case of $L=K_X$, under the condition of the local boundedness of (\ref{eq:1.3.1FiniteGenerationOfSectionModule1}) the canonical ring $R(X,K_X)$ is finitely generated.
\end{theorem}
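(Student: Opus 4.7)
The plan is to apply Theorem~\ref{theorem:1.2} (the algebraic form of Skoda's ideal generation) iteratively, peeling off one factor of $m_0 L$ at each step. The preparatory observation I would establish first is that $\Phi$ defines a natural singular Hermitian metric on $L$ with plurisubharmonic weight $\varphi_L := \log \Phi$: under a local change of frame $e_1 \mapsto g \tilde e_1$ of $L$, the local function $f^{(m)}_j$ representing $s^{(m)}_j$ transforms as $\tilde f^{(m)}_j = g^{-m} f^{(m)}_j$, so $|f^{(m)}_j|^{2/m}$ (and hence $\Phi$) scales by $|g|^{-2}$, exactly matching the transformation of a metric weight on $L$; plurisubharmonicity of $\log \Phi$ follows because $\log(\sum_i e^{u_i})$ is psh whenever each $u_i$ is, applied to $u_{m,j} := (2/m)\log|s^{(m)}_j|$.

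Next I would set $h^{(0)} := s$ and iterate for $k = 1, \ldots, \ell_m$. At step $k$, I apply Theorem~\ref{theorem:1.2} to the holomorphic section $h^{(k-1)} \in \Gamma(X, (m-(k-1)m_0) L + K_X)$, with Skoda's line bundle taken to be $m_0 L$, generating sections $s^{(m_0)}_1, \ldots, s^{(m_0)}_{q_{m_0}}$, parameters $\gamma = 1$ and $q = n$, and auxiliary bundle $E_k := (m - (n+1+k) m_0) L$ equipped with psh weight $(m - (n+1+k)m_0)\,\varphi_L$. The coefficient is nonnegative because $k \leq \ell_m$ together with $N_0 \geq (n+2)m_0$ forces $m \geq (\ell_m + n + 2) m_0 \geq (n+1+k) m_0$. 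Skoda's integrability hypothesis reduces to
$$
\int_X \frac{|h^{(k-1)}|^2 \,\Phi^{(n+1+k)m_0 - m}}{\bigl(\sum_j |s^{(m_0)}_j|^2\bigr)^{n+2}} < \infty,
$$
and this holds because the local boundedness hypothesis of the theorem, applied to the global holomorphic section $h^{(k-1)} \in \Gamma((m-(k-1)m_0)L + K_X)$ (valid since $m-(k-1)m_0 \geq N_0$), says precisely that the integrand is bounded pointwise by a constant on the compact manifold $X$. Theorem~\ref{theorem:1.2} then produces $h^{(k)}_j \in \Gamma((m - km_0) L + K_X)$ with $h^{(k-1)} = \sum_j h^{(k)}_j\, s^{(m_0)}_j$.

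After $\ell_m$ iterations, expanding $s = \sum h^{(\ell_m)}_{j_1 \cdots j_{\ell_m}}\, s^{(m_0)}_{j_1} \cdots s^{(m_0)}_{j_{\ell_m}}$ and collecting by multi-index $\nu$ with $|\nu| = \ell_m$ yields the asserted decomposition with coefficients $h_\nu \in \Gamma((m - \ell_m m_0) L + K_X)$. The finite generation assertion then follows: the residue degrees $m - \ell_m m_0$ lie in the bounded range $[N_0, N_0 + m_0)$, so a $\mathbb{C}$-basis of the finite-dimensional space of low-degree sections (those of bounded twist) serves as a finite $R(X, m_0 L)$-generating set for $\bigoplus_{m \geq 1} \Gamma(X, mL + K_X)$; the special case $L = K_X$ follows from the ring structure of $R(X, K_X)$.

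The principal obstacle, I expect, is the per-step weight bookkeeping: one must confirm at each iteration both that $E_k$ carries a legitimate psh weight (guaranteed by nonnegativity of the coefficient $m - (n+1+k)m_0$, which is forced by $k \leq \ell_m$ and $N_0 \geq (n+2)m_0$) and that the local boundedness hypothesis can be applied verbatim to the Skoda output $h^{(k-1)}$ (immediate, since $h^{(k-1)} \in \Gamma((m-(k-1)m_0)L + K_X)$ with $m-(k-1)m_0 \geq N_0$ is precisely the situation covered by the assumption). The choices $\gamma = 1$ and $q = n$ precisely calibrate Skoda's denominator exponent $q+1+\gamma$ to the value $n+2$ appearing in the theorem's hypothesis, which is what makes the induction close.
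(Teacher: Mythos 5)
Your proof is correct and takes essentially the same approach as the paper's: set $\varphi_L = \log\Phi$ and iterate Theorem~\ref{theorem:1.2} with Skoda's line bundle taken to be $m_0 L$, parameters $\gamma = 1$ and $q = n$, and auxiliary bundle $E_k$ chosen so that the exponent $q+1+\gamma = n+2$ matches the hypothesis at each step. Your write-up is in fact more careful than the paper's terse sketch, since it explicitly checks the nonnegativity of the weight coefficient on $E_k$ and the condition $m - (k-1)m_0 \geq N_0$ that licenses applying the boundedness hypothesis to the intermediate section $h^{(k-1)}$.
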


\begin{proof} Let $e^{-\varphi_L}=\frac{1}{\Phi}$.
For $m\geq N_0$ and any $s\in\Gamma\left(X, mL+K_X\right)$, by Theorem \ref{theorem:1.2} from
$$
\int_X\frac{\left|s\right|^2e^{-\left(m-\left(n+2\right)m_0\right)\varphi_L}}
{\left(\sum_{j=1}^{q_{m_0}}\left|s^{\left(m_0\right)}_j\right|^2\right)^{n+2}}<\infty,
$$
it follows that there
exist
$$
h_1,\cdots,h_{q_{m_0}}\in\Gamma\left(X,\left(m-m_0\right)L\right)
$$
such that $s=\sum_{j=1}^{q_{m_0}}h_j s^{\left(m_0\right)}_j$.   If
$m-\left(n+2\right)m_0$ is still not less than $N_0$,
we can apply the argument to each $h_j$ instead of $s$ until by induction on $\nu$ we get
$$
h^{(j_1,\cdots,j_\nu)}_1,\cdots,h^{(j_1,\cdots,j_\nu)}_{q_{m_0}}\in\Gamma\left(X,\left(m-m_0\left(\nu+1\right)\right)L+K_X\right)
$$
for $1\leq j_1,\cdots,j_\nu\leq q_{m_0}$ with $1\leq\nu\leq\ell_m$ such that
$$
s=\sum_{1\leq j_1,\cdots, j_\nu\leq
q_{m_0}}h^{(j_1,\cdots,j_{\nu-1})}_{j_\nu}\prod_{\lambda=1}^\nu s^{\left(m_0\right)}_{j_\lambda}
$$
with $h^{(0)}_j=h_j$ for $1\leq j\leq q_{m_0}$.
\end{proof}

We now introduce the three ingredients for our proof of Skoda's ideal generation.

\section{Cauchy-Schwarz Inequality for Tensors}\label{section2:Cauchy-SchwarzInequalityForTensors}

The following Cauchy-Schwarz inequality for tensors is introduced to handle inequalities for curvature operator conditions.  Its proof will be done by applying the usual Cauchy-Schwarz inequality for vectors and a reduction to the special case of an orthonormal set of vectors by linear transformations.

\begin{proposition}[Cauchy-Schwarz Inequality for Tensors]\label{Proposition:2.1CauchySchwarzInequalityForTensors}  Let ${\mathbf S}$ and ${\mathbf T}$ be elements of ${\mathbb C}^r\otimes{\mathbb C}^n$.  Then
$$
\left|\left<{\mathbf S},\,{\mathbf T}\right>_{{\mathbb C}^r\otimes{\mathbb C}^n}\right|^2
\leq\min(r,n)\left\|\left<{\mathbf S},\,{\mathbf T}\right>_{{\mathbb C}^n}\right\|_{{\mathbb C}^r\otimes{\mathbb C}^r}^2.
$$
\end{proposition}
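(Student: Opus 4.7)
The plan is to use the invariance of both sides under unitary changes of basis in $\mathbb{C}^r$ and in $\mathbb{C}^n$ to reduce to an especially simple ``orthonormal situation'' via the singular value decomposition of $\mathbf{S}$, after which the inequality with the factor $\min(r,n)$ drops out of a single application of the ordinary Cauchy--Schwarz inequality.

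Concretely, I would first invoke the SVD to write
$$\mathbf{S}=\sum_{i=1}^m \sigma_i\, u_i\otimes v_i,$$
where $m=\min(r,n)$, the $\sigma_i\ge 0$ are the singular values, and $\{u_i\}$, $\{v_k\}$ are orthonormal sets in $\mathbb{C}^r$ and $\mathbb{C}^n$ respectively, extended to full orthonormal bases. In these bases expand $\mathbf{T}=\sum_{j,k}T_{jk}\,u_j\otimes v_k$. A direct computation then gives the two key expressions
$$\langle\mathbf{S},\mathbf{T}\rangle_{\mathbb{C}^r\otimes\mathbb{C}^n}=\sum_{i=1}^m \sigma_i\,\overline{T_{ii}},\qquad \bigl(\langle\mathbf{S},\mathbf{T}\rangle_{\mathbb{C}^n}\bigr)_{j\ell}=\sigma_j\,\overline{T_{\ell j}}\ (\text{for }j\le m,\ 0\ \text{else}),$$
so that
$$\bigl\|\langle\mathbf{S},\mathbf{T}\rangle_{\mathbb{C}^n}\bigr\|_{\mathbb{C}^r\otimes\mathbb{C}^r}^2=\sum_{j=1}^m\sum_{\ell=1}^r \sigma_j^2\,|T_{\ell j}|^2.$$
Now the usual Cauchy--Schwarz inequality applied to the pairing of $(1,\dots,1)\in\mathbb{C}^m$ with $(\sigma_i\overline{T_{ii}})_{i=1}^m$ yields
$$\Bigl|\sum_{i=1}^m\sigma_i\overline{T_{ii}}\Bigr|^2\le m\sum_{i=1}^m \sigma_i^2|T_{ii}|^2\le m\sum_{j=1}^m\sum_{\ell=1}^r\sigma_j^2|T_{\ell j}|^2,$$
where the last step merely enlarges the sum by adding the nonnegative terms with $\ell\neq j$. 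This is precisely the desired inequality, with the factor $\min(r,n)$ appearing as the length $m$ of the Cauchy--Schwarz sum.

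The only real subtlety, and the step to be checked carefully, is the bookkeeping in identifying $\langle\mathbf{S},\mathbf{T}\rangle_{\mathbb{C}^n}$ as the partial contraction in the second factor and verifying that it transforms properly under the unitary change of basis, so that both sides of the claimed inequality are genuinely invariant and the reduction to SVD form is legitimate. Once that is in place, the factor $\min(r,n)$ arises for the structural reason that the SVD supports $\mathbf{S}$ on a ``diagonal'' indexed only by $i=1,\dots,\min(r,n)$, making this the correct length over which to apply Cauchy--Schwarz, rather than the full $r$ (which would be what one obtains from the crude bound $|\sum_{j=1}^r a_j|^2\le r\sum_{j=1}^r|a_j|^2$).
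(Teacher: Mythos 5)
Your proof is correct, but it takes a genuinely different route from the paper's. The paper proves the inequality with factor $r$ and with factor $n$ by two separate arguments and then observes that the minimum of the two upper bounds works: for the factor $r$ it applies the ordinary Cauchy--Schwarz inequality directly to the pairing of the Kronecker tensor $\delta_{\ell m}$ with the matrix $\left(\sum_k s_{m,k}t_{\ell,k}\right)_{m,\ell}$, and for the factor $n$ (assuming $n\le r$) it perturbs $\mathbf{S}$ to generic position and then applies a \emph{non-unitary} $GL(n,\mathbb{C})$ change of column basis $R$ to $\mathbf{S}$ together with the compensating transformation $\overline{(R^t)^{-1}}$ to $\mathbf{T}$ so that the columns of $\mathbf{S}$ become literally the standard basis vectors. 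Your single SVD reduction achieves the same diagonalization of $\mathbf{S}$ by a two-sided \emph{unitary} change of coordinates, under which both $\langle\mathbf{S},\mathbf{T}\rangle_{\mathbb{C}^r\otimes\mathbb{C}^n}$ and $\|\langle\mathbf{S},\mathbf{T}\rangle_{\mathbb{C}^n}\|_{\mathbb{C}^r\otimes\mathbb{C}^r}$ are manifestly invariant (the latter because the partial contraction transforms by $U\otimes\overline{U}$, which preserves the Frobenius norm), so no perturbation argument or case split $r\le n$ versus $n\le r$ is needed. What you give up is the paper's very short self-contained proof of the factor-$r$ bound; what you gain is a uniform treatment in which the factor $\min(r,n)$ appears structurally as the rank of the SVD support, and which makes the invariance transparent rather than something that must be checked for a non-isometric change of frame. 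Your computations
$\langle\mathbf{S},\mathbf{T}\rangle_{\mathbb{C}^r\otimes\mathbb{C}^n}=\sum_{i=1}^m\sigma_i\overline{T_{ii}}$,
$\|\langle\mathbf{S},\mathbf{T}\rangle_{\mathbb{C}^n}\|^2=\sum_{j=1}^m\sum_{\ell=1}^r\sigma_j^2|T_{\ell j}|^2$,
and the final Cauchy--Schwarz step with the all-ones vector of length $m$ are all correct; you should simply carry out the short invariance check you flag at the end, and note that your Hermitian convention matches the paper's bilinear convention of Remark~\ref{remark:2.2} after replacing $\mathbf{T}$ by its entrywise conjugate, exactly as the paper itself does.
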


\begin{remark}\label{remark:2.2}  When the tensors ${\mathbf S}$ and ${\mathbf T}$ are represented by $r\times n$ matrices
$${\mathbf S}=\left(s_{\ell,k}\right)_{1\leq\ell\leq r, 1\leq k\leq n}\quad{\rm and}\quad{\mathbf T}=\left(t_{\ell,k}\right)_{1\leq\ell\leq r, 1\leq k\leq n},
$$
the inequality in Proposition \ref{Proposition:2.1CauchySchwarzInequalityForTensors} becomes
$$
\left|\sum_{1\leq\ell\leq r,1\leq k\leq n}
s_{\ell,k}t_{\ell,k}
\right|^2\leq\min(r,n)\sum_{1\leq m,\ell\leq r}
\left|\sum_{k=1}^n
s_{m,k}t_{\ell,k}
\right|^2.
$$
The special case of $n=1$ is simply the following usual Cauchy-Schwarz inequality for vectors
$$
\left|\sum_{1\leq\ell\leq r}
s_\ell t_\ell
\right|^2\leq \left(\sum_{1\leq\ell\leq r}|s_\ell|^2\right)\left(\sum_{1\leq\ell\leq r}|t_\ell|^2\right)
$$
where $s_\ell=s_{\ell,1}$ and $t_\ell=t_{\ell,1}$.
\end{remark}

\begin{proof}[Proof of Proposition \ref{Proposition:2.1CauchySchwarzInequalityForTensors}] Let ${\mathbf S}=\left(s_{\ell,k}\right)_{1\leq\ell\leq r,\,1\leq k\leq n}$ and ${\mathbf T}=\left(t_{\ell,k}\right)_{1\leq\ell\leq r,\,1\leq k\leq n}$.  The inequality
$$
\left|\left<{\mathbf S},\,{\mathbf T}\right>_{{\mathbb C}^r\otimes{\mathbb C}^n}\right|^2
\leq r\left\|\left<{\mathbf S},\,{\mathbf T}\right>_{{\mathbb C}^n}\right\|_{{\mathbb C}^r\otimes{\mathbb C}^r}
$$
is the same as
$$
\left|\sum_{1\leq\ell\leq r,1\leq k\leq n}
s_{\ell,k}t_{\ell,k}
\right|^2\leq r\sum_{1\leq m,\ell\leq r}
\left|\sum_{k=1}^n
s_{m,k}t_{\ell,k}
\right|^2,
$$
which simply follows from the following application of the Cauchy-Schwarz inequality with focus on the summation over the double index $(\ell,m)$ (where $\delta_{\ell m}$ for $1\leq\ell, m\leq r$ is the Kronecker delta)
$$
\begin{aligned}
&\left|\sum_{1\leq\ell\leq r,1\leq k\leq n}
s_{\ell,k}t_{\ell,k}
\right|^2\cr
&=\left|\sum_{1\leq\ell,m\leq r,1\leq k\leq n}
\delta_{\ell m}s_{m,k}t_{\ell,k}
\right|^2\cr
=&\left|\sum_{1\leq\ell,m\leq r}
\delta_{\ell m}\left(\sum_{k=1}^n s_{m,k}t_{\ell,k}\right)
\right|^2\cr
&\leq\left(\sum_{1\leq m,\ell\leq r}(\delta_{m\ell})^2\right)\left(\sum_{1\leq m,\ell\leq r}
\left|\sum_{k=1}^n
s_{m,k}t_{\ell,k}
\right|^2\right)\cr
&=r\sum_{1\leq m,\ell\leq r}
\left|\sum_{k=1}^n
s_{m,k}t_{\ell,k}
\right|^2.\cr
\end{aligned}
$$
The inequality
$$
\left|\left<{\mathbf S},\,{\mathbf T}\right>_{{\mathbb C}^r\otimes{\mathbb C}^n}\right|^2
\leq n\left\|\left<{\mathbf S},\,{\mathbf T}\right>_{{\mathbb C}^n}\right\|_{{\mathbb C}^r\otimes{\mathbb C}^r}
$$
is the same as
$$
\left|\sum_{1\leq\ell\leq r,1\leq k\leq n}
s_{\ell,k}t_{\ell,k}
\right|^2\leq n\sum_{1\leq m,\ell\leq r}
\left|\sum_{k=1}^n
s_{m,k}t_{\ell,k}
\right|^2.
$$
Instead of proving this inequality we prove the following equivalent inequality which is obtained by replacing $t_{\ell,k}$ by its complex conjugate $\overline{t_{\ell,k}}$.
\begin{equation}\label{eq:2.3.1PropositionCauchySchwarzInequalityForTensors}
\left|\sum_{1\leq\ell\leq r,1\leq k\leq n}
s_{\ell,k}\overline{t_{\ell,k}}
\right|^2\leq n\sum_{1\leq m,\ell\leq r}
\left|\sum_{k=1}^n
s_{m,k}\overline{t_{\ell,k}}
\right|^2.
\end{equation}
Denote the $n$ column $r$-vectors of the $r\times n$ matrix $${\mathbf S}=\left(s_{\ell,k}\right)_{1\leq\ell\leq r, 1\leq k\leq n}$$ by $\vec s_1,\cdots,\vec s_n$.  Likewise we denote the $n$ column $r$-vectors of the $r\times n$ matrix $${\mathbf T}=\left(t_{\ell,k}\right)_{1\leq\ell\leq r, 1\leq k\leq n}$$ by $\vec t_1,\cdots,\vec t_n$.

To prove (\ref{eq:2.3.1PropositionCauchySchwarzInequalityForTensors}) it suffices to assume that $n\leq r$ and by slightly perturbing $\vec s_1,\cdots,\vec s_n$ and then taking the limit to go back to the original set of $\vec s_1,\cdots,\vec s_n$, we can assume without loss of generality that the vectors $\vec s_1,\cdots,\vec s_n$ of ${\mathbb C}^r$ are ${\mathbb C}$-linearly independent.  We apply a ${\mathbb C}$-linear transformation given by an $n\times n$ matrix $R$ to the $n$ elements
${\vec t}_1,\cdots,{\vec t}_n$
of ${\mathbb C}^r$ and at the same time the ${\mathbb C}$-linear transformation given by the $n\times n$ matrix $\overline{\left(R^t\right)^{-1}}$, which is the complex-conjugate transpose inverse of $R$, to
the $n$ elements
${\vec t}_1,\cdots,{\vec t}_n$
of ${\mathbb C}^r$ in order to preserve the inner product
$$\sum_{k=1}^n s_{j,k}\overline{t_{\ell,k}}$$
for any fixed $1\leq j,\ell\leq r$.  So without loss of generality we can assume that the set
of $n$ elements
${\vec s}_1,\cdots,{\vec s}_n$
of ${\mathbb C}^r$ are orthonormal with respect to $\left<\cdot,\,\cdot\right>_{{\mathbb C}^r}$ and explicitly given by $s_{j,k}$ being equal to the Kronecker delta $\delta_{jk}$ for $1\leq j,k\leq n$ and equal to $0$ for $n<j\leq r$.  In particular, the expression
$$
\sum_{1\leq m,\ell\leq r}
\left|\sum_{k=1}^n
s_{m,k}\overline{t_{\ell,k}}
\right|^2
$$
is equal to
$$
\sum_{\ell=1}^r\sum_{m=1}^n
\left|\sum_{k=1}^n
\delta_{mk}\overline{t_{\ell,k}}
\right|^2=\sum_{\ell=1}^r\sum_{m=1}^n
\left|\overline{t_{\ell,m}}
\right|^2=\sum_{k=1}^n\left\|\vec t_k\right\|^2_{{\mathbb C}^r}.
$$
The left-hand of the inequality (\ref{eq:2.3.1PropositionCauchySchwarzInequalityForTensors}) can be rewritten as
$$
\left|\sum_{1\leq\ell\leq r,1\leq k\leq n}
s_{\ell,k}\,\overline{t_{\ell,k}}\,
\right|^2=\left|\sum_{k=1}^n\left<\vec s_k,\,\vec t_k\right>_{{\mathbb C}^r}\right|^2,
$$
which by the Cauchy-Schwarz inequality is
$$
\begin{aligned}&\leq n\sum_{k=1}^n\left|\left<\vec s_k,\,\vec t_k\right>_{{\mathbb C}^r}\right|^2
\leq n\sum_{k=1}^n\left\|\vec t_k\right\|^2_{{\mathbb C}^r}\cr
&=
n\sum_{1\leq m,\ell\leq r}
\left|\sum_{k=1}^n
s_{m,k}\overline{t_{\ell,k}}
\right|^2.\cr
\end{aligned}
$$
\end{proof}

\begin{remark}\label{remark:2.3} Elements of ${\mathbb C}^r\otimes{\mathbb C}^n$ can naturally be identified with ${\mathbb C}$-linear operators between the vector spaces ${\mathbb C}^r$ and ${\mathbb C}^n$.  In the literature there are generalizations of the usual Cauchy-Schwarz inequality to bounded ${\mathbb C}$-linear operators between Hilbert spaces.  For example, Lemma 2.4 on p.193 of \cite{Haagerup1985} gives the following generalization of the usual Cauchy-Schwarz inequality
$$
\left\|\sum_{j=1}^n a_j\otimes\overline{b_j}\,\right\|_{H\otimes\overline{K}}\leq
\ \left\|\sum_{j=1}^n a_j\otimes\overline{a_j}\,\right\|_{H\otimes\overline{H}}\ \
\left\|\sum_{j=1}^n b_j\otimes\overline{b_j}\,\right\|_{K\otimes\overline{K}},
$$
where $a_1,\cdots,a_n$ (respectively $b_1,\cdots,b_n$) are bounded ${\mathbb C}$-linear operators on the Hilbert space $H$ (respectively $K$) over ${\mathbb C}$ (with appropriate interpretation for taking complex-conjugates).   The norm
$$
\left\|\sum_{j=1}^n a_j\otimes\overline{b_j}\,\right\|_{H\otimes\overline{K}}
$$
means the norm of the ${\mathbb C}$-linear operator $\sum_{j=1}^n a_j\otimes\overline{b_j}$ on the Hilbert space $H\otimes\overline{K}$, which is the supremum of the norm of the image of any element of unit norm in $H\otimes\overline{K}$.
However, the nature of such generalizations is different and the essential factor $\min(r,n)$ in the inequality in Proposition \ref{Proposition:2.1CauchySchwarzInequalityForTensors}
is rather delicate and does not occur readily in such generalizations without repeating the kind of arguments used in the proof of Proposition \ref{Proposition:2.1CauchySchwarzInequalityForTensors}.
\end{remark}

\section{Nakano Curvature of Bundle and Twisting by Metric of Trivial Line Bundle}\label{section3:NakanoCurvatureOfBundleAndTwistingByMetricOfTrivialLineBundle}

We now straightforwardly compute the Nakano curvature in the case of a special subbundle of rank $p-1$ in the trivial vector
bundle of rank $p$ whose induced metric is twisted by a special weight function ({\it i.e.},  a metric of the trivial line bundle) to conclude the
nonnegativity of the Nakano curvature.

\subsection{\it Nakano Curvature of Vector Bundles and Subbundles}\label{subsection:3.1}  Let $U$ be a domain in ${\mathbb C}^n$ with coordinates $z^1,\cdots,z^n$.  Let $V$ be a holomorphic vector bundle of rank $r$ on $U$ with smooth Hermitian metric $H$ whose components are $H_{j\bar k}$ (for $1\leq j,k\leq r$).  We use the notation $\Theta^V_{j\bar k\lambda\bar\nu}$ (for $1\leq j,k\leq r$ and $1\leq\lambda,\nu\leq n$) to denote the components of the curvature tensor $\Theta^V$ of $V$ which is defined as
$$\Theta^V_{j\bar k\lambda\bar\nu}=-\partial_\lambda\partial_{\bar\nu} H_{j\bar k}+\sum_{1\leq i,\ell \leq r}H^{\bar\ell i}\left(\partial_\lambda H_{k\bar\ell}\right)\overline{\left(\partial_\nu H_{i\bar j}\right)}.
$$
When there is no possible confusion, we will simply use $\Theta_{j\bar k\lambda\bar\nu}$ to denote $\Theta^V_{j\bar k\lambda\bar\nu}$ and use $\Theta$ to denote $\Theta^V$.

The sign convention is chosen here so that when $r=1$, the nonnegativity of the curvature of the metric $e^{-\varphi}$ means the plurisubharmonicity of the function $\varphi$.

When we have a holomorphic subbundle $W$ of $V$ with rank $1\leq s<r$ and local frame $e_1,\cdots,e_s$ so that $e_1,\cdots,e_r$ is orthonormal at the point $P$ under consideration (after the application of an $P$-dependent element of $GL(r,{\mathbb C})$), the curvature $\Theta^W_{j\bar k\lambda\bar\nu}$ for $W$ (for $1\leq j,k\leq s$ and $1\leq\lambda,\nu\leq n$) is given by
\begin{equation}\label{eq:3.1.1NakanoCurvatureOfVectorBundlesAndSubbundles1}
\begin{aligned}\Theta^W_{j\bar k\lambda\bar\nu}
&=-\partial_\lambda\partial_{\bar\nu} H_{j\bar k}+\sum_{\ell=1}^s\left(\partial_\lambda H_{j\bar\ell}\right)\overline{\left(\partial_\nu H_{\ell\bar k}\right)}\cr
&=\Theta^V_{j\bar k\lambda\bar\nu}-\sum_{\ell=s+1}^r\left(\partial_\lambda H_{j\bar\ell}\right)\overline{\left(\partial_\nu H_{\ell\bar k}\right)}\cr
\end{aligned}
\end{equation}
when $W$ is given the metric induced from the metric of $V$.

By the Nakano curvature of $V$ we mean the Hermitian form on $V\otimes T_U$ defined by
$$
\left(v^{j\lambda}\right)_{1\leq j\leq r, 1\leq\lambda\leq n}
\mapsto\sum_{1\leq j,k\leq r,1\leq\lambda,\nu\leq n}\Theta^V_{j\bar k\lambda\bar\nu}v^{j\lambda}\overline{v^{k\nu}}
$$
for $\left(v^{j\lambda}\right)_{1\leq j\leq r, 1\leq\lambda\leq n}$ in $V\otimes T_U$.  This Hermitian form was introduced by S. Nakano in (2.10) on p.8 of \cite{Nakano1955}.  When there is no possible confusion, we will simply use $\Theta^V$ or $\Theta^V_{j\bar k\lambda\bar\nu}$ to mean this Hermitian form on $V\otimes T_U$.  We also use $\Theta^V$ to mean the self-adjoint operator on the vector bundle $V\otimes T_U$ and use $\left(\Theta^V\right)^{-1}$ to mean the self-adjoint operator on $V\otimes T_U$ which is the inverse of the self-adjoint operator $\Theta^V$ when $\Theta^V$ is strictly positive.  When the self-adjoint operator $\Theta^V$ is only semipositive, we use the notation $\left(\Theta^V\right)^{-1}$ to mean the limit of $\left(\Theta^V+\varepsilon I\right)^{-1}$ as the positive number $\varepsilon$ approaches $0$, where $I$ is the identity operator of $V\otimes T_U$.  We will use this limit $\left(\Theta^V\right)^{-1}$ in the context of the inner product $\left(\left(\Theta^V\right)^{-1}F,\,F\right)$ for an element $F$ of $V\otimes T_U$ which, defined as $\lim_{\varepsilon\to 0+}\left(\left(\Theta^V+\varepsilon I\right)^{-1}F,\,F\right)$, is allowed to assume the value $+\infty$.

For another element $F^\prime$ of $V\otimes T_U$ (over the same point of $U$), by letting $\varepsilon\to 0+$ in the Cauchy-Schwarz inequality
$$
\begin{aligned}\left|\left(F,\,F^\prime\right)\right|^2&=\left|\left(\left(\Theta^V+\varepsilon I\right)^{\frac{1}{2}}\left(\Theta^V+\varepsilon I\right)^{-\frac{1}{2}}F,\,F^\prime\right)\right|^2\cr
&=\left|\left(\left(\Theta^V+\varepsilon I\right)^{-\frac{1}{2}}F,\,\left(\Theta^V+\varepsilon I\right)^{\frac{1}{2}}F^\prime\right)\right|^2\cr
&\leq\left\|\left(\Theta^V+\varepsilon I\right)^{-\frac{1}{2}}F\right\|^2\,\left\|\left(\Theta^V+\varepsilon I\right)^{\frac{1}{2}}F^\prime\right\|^2\cr
&=\left(\left(\Theta^V+\varepsilon I\right)^{-1}F,\,F\right)\,\left(\left(\Theta^V+\varepsilon I\right)F^\prime,\,F^\prime\right),\cr
\end{aligned}
$$
we conclude that
\begin{equation}\label{eq:3.1.2NakanoCurvatureOfVectorBundlesAndSubbundles2}
\left|\left(F,\,F^\prime\right)\right|^2\leq\left(\left(\Theta^V\right)^{-1}F,\,F\right)\,\left(\left(\Theta^V\right)F^\prime,\,F^\prime\right).
\end{equation}
This inequality (\ref{eq:3.1.2NakanoCurvatureOfVectorBundlesAndSubbundles2}) of Cauchy-Schwarz type also clearly holds for any semipositive tensors of the same tensor type as a Nakano curvature tensor.

Note that for a $V$-valued $(0,1)$-form $F$ at a point of $U$, in the computation of
$\left(\Theta^V\right)^{-1}F$ and $\left(\Theta^V\right)F$, the $V$-valued $(0,1)$-form $F$ is naturally identified with an element of $V\otimes T_U$ at that point with the raising of the barred subscript of $F$ to an unbarred superscript.  The same kind of manipulation involving Nakano curvature and inner product is used when $F$ is a $V$-valued $(n,1)$-form instead of a $V$-valued $(0,1)$-form.

When there is another tensor $\Xi$ with components $\Xi_{j\bar k\lambda\bar\nu}$ which defines a self-adjoint operator on $V\otimes T_U$, we say that $\Xi$ is is dominated by $\Theta^V$ or $\Xi\leq\Theta^V$ if the Hermitian form on $V\otimes T_U$ defined by $\Xi$ is $\leq$ the Hermitian form on $V\otimes T_U$ defined by $\Theta^V$.

For a $(1,1)$-form $\omega=\sum_{\lambda,\nu=1}^n\omega_{\lambda\bar\nu}dz^\lambda\wedge d\overline{z^\nu}$ we use the notation $H\otimes\omega$ to denote the tensor whose components are $H_{j\bar k}\omega_{\lambda\bar\nu}$.

\begin{proposition}[Nakano Curvature of Kernel Subbundle in Normal Fiber Coordinate]\label{Proposition:3.2NakanoCurvatureOfKernelSubbundleInNormalFiberCoordinate}  Let $g_1,\cdots,g_p$ be holomorphic functions on a domain $\Omega$ in ${\mathbb C}^n$ and let ${\mathcal K}$ be the kernel of the bundle-homomorphism $(g_1,\cdots,g_p):\left({\mathcal O}_\Omega\right)^{\oplus p}\to {\mathcal O}_\Omega$ which is given the metric induced from the standard flat metric of the trivial ${\mathbb C}$-vector bundle $\left({\mathcal O}_\Omega\right)^{\oplus p}$ of rank $p$. Let $P$ be a point of $\Omega$ with $g_1(P)=\cdots=g_{p-1}(P)=0$ and $g_p(P)=1$.  Then at the point $P$ the Nakano curvature $\Theta_{j\bar k\lambda\bar\nu}$ of the kernel vector subbundle ${\mathcal K}$ of rank $p-1$ is given by
$$
\begin{aligned}
\left(v^{j\lambda}\right)_{1\leq j\leq p-1, 1\leq\lambda\leq n}
&\mapsto\sum_{1\leq j,k\leq p-1,1\leq\lambda,\nu\leq n}\Theta_{j\bar k\lambda\bar\nu}v^{j\lambda}\overline{v^{k\nu}}
\cr
&=-\left|\sum_{1\leq j\leq p-1,1\leq\lambda\leq n}\left(\partial_\lambda g_j\right)v^{j\lambda}\right|^2\cr
\end{aligned}
$$
for $\left(v^{j\lambda}\right)_{1\leq j\leq p-1, 1\leq\lambda\leq n}$.
\end{proposition}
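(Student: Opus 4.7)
The plan is to choose a local holomorphic frame of $\mathcal{K}$ adapted to $P$ and then invoke formula (\ref{eq:3.1.1NakanoCurvatureOfVectorBundlesAndSubbundles1}) for the curvature of the induced metric on a holomorphic subbundle at a point where the extended frame is orthonormal. Since $g_p(P)=1\neq 0$, the quotients $g_j/g_p$ are holomorphic in a neighborhood of $P$, so setting
$$
e_j=\epsilon_j-\frac{g_j}{g_p}\,\epsilon_p,\qquad 1\leq j\leq p-1,
$$
where $\epsilon_1,\ldots,\epsilon_p$ is the standard frame of $\left(\mathcal{O}_\Omega\right)^{\oplus p}$, produces a local holomorphic frame of $\mathcal{K}$, because $g_j-(g_j/g_p)g_p=0$. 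In this frame the induced-metric components are
$$
H_{j\bar k}=\langle e_j,e_k\rangle=\delta_{jk}+\frac{g_j\,\overline{g_k}}{|g_p|^2},\qquad 1\leq j,k\leq p-1,
$$
and at $P$, where $g_j(P)=0$ for $j\leq p-1$ and $g_p(P)=1$, the frame is orthonormal, placing us exactly in the situation of (\ref{eq:3.1.1NakanoCurvatureOfVectorBundlesAndSubbundles1}).

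The key observation is that every first-order derivative of $H_{j\bar k}$ vanishes at $P$. Differentiating $g_j\overline{g_k}/|g_p|^2$ by $\partial_\lambda$ (and symmetrically by $\partial_{\bar\nu}$) via the product rule produces a sum of terms each retaining an undifferentiated factor of $g_j$ or $\overline{g_k}$, and these vanish at $P$ for $j,k\leq p-1$; the denominator evaluates cleanly to $|g_p(P)|^2=1$. Consequently the ``connection-squared'' term $\sum_{\ell=1}^{p-1}\left(\partial_\lambda H_{j\bar\ell}\right)\overline{\left(\partial_\nu H_{\ell\bar k}\right)}\big|_P$ in (\ref{eq:3.1.1NakanoCurvatureOfVectorBundlesAndSubbundles1}) drops out, and the curvature at $P$ reduces to
$$
\Theta_{j\bar k\lambda\bar\nu}\big|_P=-\partial_\lambda\partial_{\bar\nu}H_{j\bar k}\big|_P.
$$

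A further product-rule expansion of $\partial_\lambda\partial_{\bar\nu}\bigl(g_j\overline{g_k}/|g_p|^2\bigr)$ at $P$ kills every term that still carries an undifferentiated factor of $g_j$, $g_k$, $\overline{g_j}$, or $\overline{g_k}$; the only surviving contribution is the one in which $\partial_\lambda$ hits $g_j$ and $\partial_{\bar\nu}$ hits $\overline{g_k}$, with the denominator normalized by $|g_p(P)|^2=1$. This yields $\partial_\lambda\partial_{\bar\nu}H_{j\bar k}\big|_P=(\partial_\lambda g_j)\overline{(\partial_\nu g_k)}\big|_P$, so that
$$
\sum_{1\leq j,k\leq p-1,\,1\leq\lambda,\nu\leq n}\Theta_{j\bar k\lambda\bar\nu}\,v^{j\lambda}\overline{v^{k\nu}}=-\Bigl|\sum_{1\leq j\leq p-1,\,1\leq\lambda\leq n}(\partial_\lambda g_j)\,v^{j\lambda}\Bigr|^2,
$$
which is the asserted formula. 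The only real obstacle is essentially bookkeeping: one must verify cleanly that each first-order derivative of $H_{j\bar k}$ vanishes at $P$, so that the connection-squared term in (\ref{eq:3.1.1NakanoCurvatureOfVectorBundlesAndSubbundles1}) disappears and the curvature is entirely captured by the rank-one tensor $-(\partial_\lambda g_j)\overline{(\partial_\nu g_k)}$.
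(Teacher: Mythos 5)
Your proposal is correct, but it follows the first equality in (\ref{eq:3.1.1NakanoCurvatureOfVectorBundlesAndSubbundles1}) (the intrinsic curvature formula), whereas the paper follows the second equality (the second–fundamental-form identity $\Theta^W=\Theta^V-\sum_{\ell>s}(\partial_\lambda H_{j\bar\ell})\overline{(\partial_\nu H_{\ell\bar k})}$). Concretely, the paper takes the frame $e_j=g_p\epsilon_j-g_j\epsilon_p$ for ${\mathcal K}$ together with $e_p=(g_1,\ldots,g_p)$, observes $e_1,\ldots,e_p$ is orthonormal at $P$, uses $\Theta^V\equiv 0$ for the trivial bundle, and then needs only the single \emph{first}-order quantity $\partial_\lambda H_{j\bar p}=\partial_\lambda(g_j\overline{g_p})=(\partial_\lambda g_j)\overline{g_p}$, which at $P$ is just $\partial_\lambda g_j$; no second derivatives of the metric are ever touched. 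You instead use the rescaled frame $e_j=\epsilon_j-(g_j/g_p)\epsilon_p$ (valid near $P$ since $g_p(P)\ne 0$; it is the paper's frame divided by $g_p$), verify by the product rule that $\partial_\lambda H_{j\bar k}|_P=0$ for $1\le j,k\le p-1$ so that the connection-squared term drops, and then extract the single surviving term of $\partial_\lambda\partial_{\bar\nu}H_{j\bar k}|_P$. Both computations are sound and give $\Theta_{j\bar k\lambda\bar\nu}|_P=-(\partial_\lambda g_j)\overline{(\partial_\nu g_k)}$; the paper's route is slightly shorter because it sidesteps the second-derivative bookkeeping that your argument must carry out (and that your write-up describes somewhat informally rather than computes explicitly), while yours has the mild merit of not needing to package the perpendicular direction $e_p$ into an extended orthonormal frame.
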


\begin{proof} At points of $\Omega$ where $g_1,\cdots,g_p$ are not all zero, the kernel subbundle ${\mathcal K}$ is spanned by the local basis of $p-1$ local holomorphic sections
$$
e_j=\left(0,\cdots,0,\underbrace{g_p}_{j\,{\rm th}},0,\cdots,0,-g_j\right)
$$
for $1\leq j\leq p-1$ so that the metric induced on ${\mathcal K}$ by the trivial ${\mathbb C}$-vector bundle $\left({\mathcal O}_\Omega\right)^{\oplus p}$ is given by
\begin{equation}\label{eq:3.3.1NakanoCurvatureOfKernelSubbundleInNormalFiberCoordinate}
H_{j\bar k}=\left<e_j,\,e_k\right>=\delta_{jk}\left|g_p\right|^2+g_j\overline{g_k}
\end{equation}
for $1\leq j,k\leq p-1$.  Let $e_p=(g_1,\cdots,g_p)$.  Since $g_1(P)=\cdots=g_{p-1}(P)=0$ and $g_p(P) =1$, it follows from (\ref{eq:3.3.1NakanoCurvatureOfKernelSubbundleInNormalFiberCoordinate}) that the frame $e_1,\cdots,e_p$ of $\left({\mathcal O}_\Omega\right)^{\oplus p}$ is orthonormal at $P$.  We also denote the inner product $\left<e_j,\,e_k\right>$ by $H_{j\bar k}$ when either $j$ or $k$ is $p$.

We now compute the curvature $\Theta_{j\bar k\lambda\bar\nu}$ of $H_{j\bar k}$ at the point $P$ by using the formula (\ref{eq:3.1.1NakanoCurvatureOfVectorBundlesAndSubbundles1}) with $V=\left({\mathcal O}_\Omega\right)^{\oplus p}$ and $W={\mathcal K}$ as follows.  Since $V$ is the trivial ${\mathbb C}$-vector bundle of rank $p$, we have
$\Theta^V_{j\bar k\alpha\bar\beta }\equiv 0$ and $r=p$ and $s=p-1$ so that
$$
\Theta^W_{j\bar k\alpha\bar\beta}=-\left(\partial_\alpha H_{j\bar p}\right)\overline{\left(\partial_\beta H_{k\bar p}\right)}.
$$
Since
$$
H_{j\bar p}=\left<e_j,\,e_p\right>=g_j\overline{g_p}
$$
for $1\leq j,k\leq p-1$ by (\ref{eq:3.3.1NakanoCurvatureOfKernelSubbundleInNormalFiberCoordinate}), it follows that at $P$,
$$
\Theta^W_{j\bar k\lambda\bar\nu}=-\left(\partial_\lambda H_{j\bar p}\right)\overline{\left(\partial_\nu H_{k\bar p}\right)}
=-\left(\partial_\lambda g_j\right)\overline{\left(\partial_\nu g_k\right)}
$$
for $1\leq j,k\leq p-1$
and for $$\left(v^{j\lambda}\right)_{1\leq j\leq p-1,1\leq\lambda\leq n}$$ at $P$
we have
$$
\begin{aligned}\sum_{1\leq j,k\leq p-1,1\leq\lambda,\nu\leq n}\Theta^W_{j\bar k\lambda\bar\nu}v^{j\lambda}\overline{v^{k\nu}}
&=-\sum_{1\leq j,k\leq p-1,1\leq\lambda,\nu\leq n}\left(\partial_\lambda g_j\right)\overline{\left(\partial_\nu g_k\right)}v^{j\lambda}\overline{v^{k\nu}}\cr
&=-\left|\sum_{1\leq j\leq p-1,1\leq\lambda\leq n}\left(\partial_\lambda g_j\right)v^{j\lambda}\right|^2.\cr
\end{aligned}
$$
\end{proof}

\subsection{\it Curvature Contribution from Twisting by Line Bundle}\label{subsection:3.2}  We twist the induced metric $H$ of the kernel vector bundle ${\mathcal K}$ by $\frac{1}{|g|^{2\gamma}}$ for some $\gamma>0$, where $|g|^2=\sum_{j=1}^p|g_j|^2$.  At the point $P$ of $\Omega$ where $g_1(P)=\cdots=g_{p-1}(P)=0$, the curvature contribution from the metric $\frac{1}{|g|^2}$ of the trivial line bundle is
$$
\partial\bar\partial\log|g|^2=
\frac{1}{|g|^4}\sum_{1\leq j<k\leq p}\left((\partial  g_j)g_k-
(\partial  g_k)g_j\right)\wedge\overline{\left((\partial  g_j)g_k-
(\partial  g_k)g_j\right)}
$$
which at $P$ is reduced to
\begin{equation}\label{eq:3.4.1CurvatureContributionFromTwistingByLineBundle1}
\partial\partial\log|g|^2=
\frac{1}{|g_p|^2}\sum_{1\leq j\leq p-1}(\partial g_j)\wedge\overline{(\partial g_j)}
\end{equation}
(because $(\partial g_j)g_k-
(\partial g_k)g_j$ can only be nonzero when $k=p$ and $j\not=k$ on account of $j<k$).  In addition, when $g_p(P)$ is assumed to be $1$,
\begin{equation}\label{eq:3.4.2CurvatureContributionFromTwistingByLineBundle2}
\partial\bar\partial\log|g|^2=
\sum_{1\leq j\leq p-1}(\partial g_j)\wedge\overline{(\partial g_j)}
\end{equation}
at the point $P$.
Thus, combined with the computation in Proposition \ref{Proposition:3.2NakanoCurvatureOfKernelSubbundleInNormalFiberCoordinate}, under the assumption of $g_1(P)=\cdots=g_p(P)=0$ and $g_p(P)=1$ the value at $\left(v^{j\lambda}\right)_{1\leq j\leq p-1, 1\leq\lambda\leq n}$ of the Hermitian form at $P$ defined by the Nakano curvature of the metric
$\frac{H_{j\bar k}}{|g|^{2\gamma}}$ for $\gamma>0$ is
$$
\gamma\sum_{1\leq j,\ell\leq p-1}\left|\sum_{1\leq\lambda\leq n}(\partial_\lambda g_j)v^{\ell\lambda}\right|^2
-\left|\sum_{1\leq j\leq p-1,1\leq\lambda\leq n}\left(\partial_\lambda g_j\right)v^{j\lambda}\right|^2.
$$
By applying the Cauchy-Schwarz inequality for tensors as formulated in Remark \ref{remark:2.2}
$$
\left|\sum_{1\leq\ell\leq r,1\leq k\leq n}
s_{\ell,k}t_{\ell,k}
\right|^2\leq\min(r,n)\sum_{1\leq m,\ell\leq r}
\left|\sum_{k=1}^n
s_{m,k}t_{\ell,k}
\right|^2
$$
with $r=p-1$ and $s_{\ell,\lambda}=\partial_\lambda g_\ell$ and $t_{\ell,\lambda}=v^{\ell\lambda}$, we conclude that
$$
\gamma\sum_{1\leq j,\ell\leq p-1}\left|\sum_{1\leq\lambda\leq n}(\partial_\lambda g_j)v^{\ell\lambda}\right|^2
\geq\left|\sum_{1\leq j\leq p-1,1\leq\lambda\leq n}\left(\partial_\lambda g_j\right)v^{j\lambda}\right|^2
$$
for $\gamma\geq\min(n,p-1)$ at a point $P$ with $g_1(P)=\cdots=g_p(P)=0$ and $g_p(P)=1$.

We now drop the assumption of $g_1(P)=\cdots=g_p(P)=0$ and $g_p(P)=1$ so that when an element $v$ of ${\mathcal K}$
is regarded as an element $v$ of $\left({\mathcal O}_\Omega\right)^{\oplus p}$ with components $(v_1,\cdots,v_p)$, we can apply the above computation to the general case with $v_p$ not necessarily $0$.  The following conclusion now holds.

\begin{proposition}\label{proposition:3.4.3} The Hermitian form
$$
\left(v^{j\lambda}\right)_{1\leq j\leq p, 1\leq\lambda\leq n}
\mapsto\sum_{1\leq j,k\leq p,1\leq\lambda,\nu\leq n}\Theta(\gamma)_{j\bar k\lambda\bar\nu}v^{j\lambda}\overline{v^{k\nu}}
$$
on $V\otimes T_\Omega$ defined by the curvature
$\Theta(\gamma)_{j\bar k\lambda\bar\nu}$
of the metric
$$H(\gamma):=\frac{H_{j\bar k}}{|g|^{2\gamma}}$$ for $\gamma\geq\min(n,p-1)$ dominates $\gamma-\min(n,p-1)$ times the Hermitian form
$$
\left(v^{j\lambda}\right)_{1\leq j\leq p, 1\leq\lambda\leq n}
\mapsto\left(\sum_{1\leq\lambda,\nu\leq n}\partial_\lambda\partial_{\bar\nu}\log|g|^2\right)\left(\sum_{j=1}^p
v^{j\lambda}\overline{v^{j\nu}}\right)
$$
defined by (the tensor product of the identity operator of $V$ and) $\partial\bar\partial\log|g|^2$ on $V\otimes T_\Omega$.
\end{proposition}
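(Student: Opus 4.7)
The plan is to reduce the statement to the pointwise computation already carried out just before the proposition under the normalization $g_1(P) = \cdots = g_{p-1}(P) = 0$ and $g_p(P) = 1$. I would fix a point $P \in \Omega$ with $g(P) \neq 0$ (the inequality is vacuous on the zero locus of $g$, where the twisted metric is singular) and exploit the invariance of every ingredient on both sides of the claimed inequality under two transformations: (a) a constant unitary change of frame $U \in U(p)$ on the source trivial bundle, and (b) a constant nonzero rescaling of the tuple $(g_1,\ldots,g_p)$. The standard flat metric on $\left(\mathcal{O}_\Omega\right)^{\oplus p}$, the kernel subbundle $\mathcal{K}$, the weight $|g|^2$, the induced metric $H$, and the identity operator of $V$ are all manifestly $U(p)$-equivariant; and a constant rescaling multiplies $H$ and $|g|^2$ by the same positive scalar, hence leaves both the curvature of $H/|g|^{2\gamma}$ and $\partial\bar\partial\log|g|^2$ unchanged. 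Choosing $U$ to rotate $(g_1,\ldots,g_p)^t(P)$ into $(0,\ldots,0,|g(P)|)^t$ and then rescaling by $1/|g(P)|$ brings us exactly to the normalized configuration.

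At the normalized point, the computation preceding the proposition expresses the Nakano curvature of $H(\gamma)$ evaluated on $v$ in the form $\gamma T_1 - T_2$, where $T_1 = \sum_{1 \leq j,\ell \leq p-1}\bigl|\sum_\lambda (\partial_\lambda g_j)v^{\ell\lambda}\bigr|^2$ is the positive contribution from the twisting weight and $T_2 = \bigl|\sum_{j,\lambda}(\partial_\lambda g_j)v^{j\lambda}\bigr|^2$ is the negative contribution from Proposition~\ref{Proposition:3.2NakanoCurvatureOfKernelSubbundleInNormalFiberCoordinate}. Next I would observe that the kernel constraint $\sum_j g_j v^{j\lambda} = 0$, combined with the normalization, forces $v^{p\lambda} = 0$ at $P$, so that the intrinsic $\mathcal{K}$-frame representation $(v^{j\lambda})_{1 \leq j \leq p-1}$ and the extrinsic $\left(\mathcal{O}_\Omega\right)^{\oplus p}$-coordinate representation $(v^{j\lambda})_{1 \leq j \leq p}$ of $v$ coincide. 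Using the explicit formula (\ref{eq:3.4.2CurvatureContributionFromTwistingByLineBundle2}) for $\partial\bar\partial\log|g|^2$ at $P$, a short identification then shows that $T_1$ is exactly the right-hand Hermitian form of the proposition evaluated on $v$.

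The proof closes by invoking the Cauchy-Schwarz inequality for tensors (Proposition~\ref{Proposition:2.1CauchySchwarzInequalityForTensors}) with $r = p-1$, $s_{\ell,\lambda} = \partial_\lambda g_\ell$, and $t_{\ell,\lambda} = v^{\ell\lambda}$. This inequality reads $T_2 \leq \min(n,p-1)\,T_1$; subtracting it from the curvature identity yields $\Theta(\gamma)(v,\overline{v}) \geq (\gamma - \min(n,p-1))\,T_1$, which is precisely the asserted domination.

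The main obstacle I anticipate is bookkeeping rather than any genuine analytic difficulty: one must carefully check that each term on both sides of the inequality truly transforms covariantly under the $U(p)$ frame change and the constant rescaling, and one must align the two representations of $v$ (intrinsic versus extrinsic) so that the pointwise formula from the preceding subsection fits directly against the formulation in the proposition. All of the substantive content of the estimate is packaged into the tensor Cauchy-Schwarz inequality of Section~\ref{section2:Cauchy-SchwarzInequalityForTensors}, whose factor $\min(n,p-1)$ is precisely what appears in the conclusion.
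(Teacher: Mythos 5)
Your proof follows essentially the same route as the paper: reduce (via $U(p)$-invariance and constant rescaling) to the normal fiber coordinates $g_1(P)=\cdots=g_{p-1}(P)=0$, $g_p(P)=1$, where $v^{p\lambda}=0$, write the Nakano curvature of $H(\gamma)$ at $P$ as $\gamma T_1 - T_2$, identify $T_1$ with the Hermitian form defined by $\partial\bar\partial\log|g|^2$ tensored with the identity, and apply the tensor Cauchy-Schwarz inequality of Proposition~\ref{Proposition:2.1CauchySchwarzInequalityForTensors} to get $T_2\leq\min(n,p-1)\,T_1$. The paper leaves the reduction to normal fiber coordinates comparatively implicit at this point (it spells out the unitary-invariance argument only later, in the proof of Proposition~\ref{proposition:7.4}), but the decomposition, the identifications, and the key inequality are all the same.
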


\section{Vanishing Theorem and Solution of $\bar\partial$ Equation for Nonnegative Nakano Curvature}\label{section4:VanishingTheoremAndSolutionOfDBarEquationForNonnegativeNakanoCurvature}

We now present the vanishing theorem and solution of the $\bar\partial$-equation for a holomorphic vector bundle of nonnegative Nakano curvature on a strictly pseudoconvex manifold, with emphasis on the lack of strict positivity of the curvature at any point.  Even for application to algebraic geometry, the strictly pseudoconvex situation is needed to enable the removal of an ample complex hypersurface to handle the singularity of the metrics of the vector bundle which is inherent to our problem at hand.

Though the setting of Skoda's ideal generation is for a Stein domain spread over ${\mathbb C}^n$ whose canonical line bundle is trivial, in this section we formulate the vanishing theorem in the setting of a general complex manifold $X$ of complex dimension $n$ whose canonical line bundle $K_X$ may not be trivial, where the solution of the $\bar\partial$-equation has to be formulated with the right-hand side being a vector-bundle-valued $(n,1)$-form and the unknown being a vector-bundle-valued $(n,0)$-form.  Later in the application of the vanishing theorem to Skoda's ideal generation, the vector-bundle-valued $(n,1)$-form on the right-hand side and the vector-bundle-valued $(n,0)$-form as the unknown will be changed respectively to the vector-bundle-valued $(0,1)$-form on the right-hand side and the vector-bundle-valued function as the unknown.

\begin{theorem}[Vanishing Theorem of Kodaira-Nakano with $L^2$ Estimate for Pseudoconvex Manifolds]\label{theorem:4.1VanishingTheoremKodairaNakanoL2EstimatePseudoconvexDomain}  Let $\Omega$ be a relatively compact domain in an $n$-dimensional complex manifold $X$ with K\"ahler metric $g_{\lambda\bar\nu}$ (for $1\leq\lambda,\nu\leq n$) such that the boundary of $\Omega$ is smooth and strictly psuedoconvex.  Let $V$ be a holomorphic vector bundle of rank $r$ on $X$ with smooth metric $H_{j\bar k}$ (for $1\leq j,k\leq r$) whose Nakano curvature $\Theta_{j\bar k\lambda\bar\nu}$ is semipositive on $X$.  Let $F$ be a smooth $V$-valued $\bar\partial$-closed $(n,1)$-form on $\Omega$ such that $\left(\Theta^{-1}F,\,F\right)_{L^2(\Omega)}$ is finite (in the sense of {\bf\ref{subsection:3.1}}). Then the equation $\bar\partial u=F$ can be solved for a smooth $V$-valued $(n,0)$-form over $\Omega$ with $\left\|u\right\|^2_{L^2(\Omega)}\leq\left(\Theta^{-1}F,\,F\right)_{L^2(\Omega)}$.
\end{theorem}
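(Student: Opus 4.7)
My plan is to run the standard H\"ormander-style $L^2$ argument based on the Bochner-Kodaira-Nakano identity, with the delicate point being how the semipositive (rather than strictly positive) Nakano curvature is handled through the $\varepsilon$-regularization of $\Theta$ already set up in \S\ref{subsection:3.1}. For smooth $V$-valued $(n,1)$-forms $\alpha$ in the domain of $\bar\partial^*$, the Bochner-Kodaira-Nakano identity on the K\"ahler manifold $X$ yields, after completion of squares,
$$\|\bar\partial\alpha\|^2_{L^2(\Omega)} + \|\bar\partial^*\alpha\|^2_{L^2(\Omega)} = \|\nabla^{1,0}\alpha\|^2_{L^2(\Omega)} + (\Theta\alpha,\alpha)_{L^2(\Omega)} + \int_{\partial\Omega}\mathcal{L}(\alpha,\alpha),$$
where $\mathcal{L}$ is the contribution of the Levi form of $\partial\Omega$. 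Strict pseudoconvexity renders this boundary term nonnegative on the $\bar\partial$-Neumann domain, giving the a~priori estimate $(\Theta\alpha,\alpha)_{L^2(\Omega)} \leq \|\bar\partial\alpha\|^2_{L^2(\Omega)} + \|\bar\partial^*\alpha\|^2_{L^2(\Omega)}$.

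Next, I would realize $u$ by the standard duality argument. Decomposing any $\alpha\in\mathrm{Dom}(\bar\partial^*)$ as $\alpha=\alpha_1+\alpha_2$ with $\alpha_1\in\ker\bar\partial$ and $\alpha_2$ orthogonal to $\ker\bar\partial$, the hypothesis $\bar\partial F=0$ gives $(\alpha,F)_{L^2}=(\alpha_1,F)_{L^2}$, while the a~priori estimate applied to $\alpha_1$ (for which $\bar\partial\alpha_1=0$) reduces to $(\Theta\alpha_1,\alpha_1)_{L^2}\leq\|\bar\partial^*\alpha_1\|^2_{L^2}=\|\bar\partial^*\alpha\|^2_{L^2}$. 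The pointwise Cauchy-Schwarz inequality (\ref{eq:3.1.2NakanoCurvatureOfVectorBundlesAndSubbundles2}) then integrates to
$$|(\alpha,F)_{L^2}|^2 \leq (\Theta^{-1}F,F)_{L^2(\Omega)}\,(\Theta\alpha_1,\alpha_1)_{L^2(\Omega)} \leq (\Theta^{-1}F,F)_{L^2(\Omega)}\,\|\bar\partial^*\alpha\|^2_{L^2(\Omega)}.$$
Thus the linear functional $\bar\partial^*\alpha\mapsto\overline{(\alpha,F)_{L^2}}$ is well-defined and bounded on $\mathrm{range}(\bar\partial^*)$; Hahn-Banach extension and Riesz representation then produce an $L^2$ form $u$ of norm at most $(\Theta^{-1}F,F)_{L^2(\Omega)}^{1/2}$ with $(\bar\partial^*\alpha,u)=(\alpha,F)$ for all such $\alpha$, which means $\bar\partial u=F$ weakly. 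Interior elliptic regularity of $\bar\partial$ combined with smoothness of $F$ then yields smoothness of $u$ on $\Omega$.

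The main obstacle, and the reason the conclusion is stated with the limiting definition $\Theta^{-1}=\lim_{\varepsilon\to 0^+}(\Theta+\varepsilon I)^{-1}$, is that for merely semipositive $\Theta$ the coercivity estimate $(\Theta\alpha_1,\alpha_1)\leq\|\bar\partial^*\alpha_1\|^2$ on its own provides no control in the directions along which $\Theta$ vanishes, so one cannot directly construct the bounded functional on $\mathrm{range}(\bar\partial^*)$. To handle this I would first carry out the entire argument above with $\Theta$ replaced by the strictly positive operator $\Theta+\varepsilon I$, obtaining a solution $u_\varepsilon$ with $\|u_\varepsilon\|^2_{L^2(\Omega)}\leq((\Theta+\varepsilon I)^{-1}F,F)_{L^2(\Omega)}$, and then let $\varepsilon\to 0^+$. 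The right-hand sides are monotonically increasing as $\varepsilon\to 0^+$ and bounded above by the finite hypothesized quantity $(\Theta^{-1}F,F)_{L^2(\Omega)}$, so $\{u_\varepsilon\}$ is uniformly $L^2$-bounded; any weak-$L^2$ limit $u$ satisfies $\bar\partial u=F$ distributionally with $\|u\|^2_{L^2(\Omega)}\leq (\Theta^{-1}F,F)_{L^2(\Omega)}$, and interior elliptic regularity then upgrades $u$ to a smooth $V$-valued $(n,0)$-form.
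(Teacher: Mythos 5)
Your first three paragraphs run the standard H\"ormander/Riesz duality argument and match the paper's proof in all essential respects: the Bochner--Kodaira--Nakano completion of squares, the reduction to $\bar\partial$-closed test forms, the pointwise Cauchy--Schwarz inequality~(\ref{eq:3.1.2NakanoCurvatureOfVectorBundlesAndSubbundles2}), the Riesz representation, and interior regularity. (Your orthogonal decomposition $\alpha=\alpha_1+\alpha_2$ is just a more explicit version of the paper's remark that ``since $F$ is $\bar\partial$-closed, it suffices to get the estimate for test forms $v$ which are $\bar\partial$-closed.'') At the end of that passage you have already obtained the bound $|(\alpha,F)_{L^2}|^2\leq(\Theta^{-1}F,F)_{L^2}\,\|\bar\partial^*\alpha\|^2_{L^2}$, which is exactly what is needed to close the argument. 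That is the proof, and it is complete.

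Your fourth paragraph, however, introduces a detour that does not work. You claim that ``one cannot directly construct the bounded functional on $\mathrm{range}(\bar\partial^*)$'' when $\Theta$ is merely semipositive, and you propose to instead ``carry out the entire argument above with $\Theta$ replaced by $\Theta+\varepsilon I$.'' Two problems. First, the worry is unfounded: the inequality~(\ref{eq:3.1.2NakanoCurvatureOfVectorBundlesAndSubbundles2}) is established in \S\ref{subsection:3.1} precisely for semipositive $\Theta$ (by taking the $\varepsilon\to 0^+$ limit at the \emph{pointwise} level, before any integration), so the direct estimate you wrote down in paragraph three is already valid; the degenerate directions of $\Theta$ are controlled by the hypothesis that $(\Theta^{-1}F,F)_{L^2(\Omega)}$ is finite, which forces $F$ to vanish (to appropriate order) where $\Theta$ degenerates. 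Second, the regularization you describe cannot be carried out as stated: the Bochner--Kodaira--Nakano identity holds with the curvature $\Theta$ of the \emph{fixed} metric $H$, and you cannot replace $\Theta$ by $\Theta+\varepsilon I$ there. If you keep the honest a~priori estimate $(\Theta\alpha_1,\alpha_1)\leq\|\bar\partial^*\alpha\|^2$ and pair it with the $\varepsilon$-regularized Cauchy--Schwarz inequality, you get $|(\alpha,F)|^2\leq((\Theta+\varepsilon I)^{-1}F,F)\bigl((\Theta\alpha_1,\alpha_1)+\varepsilon\|\alpha_1\|^2\bigr)$, and the extra term $\varepsilon\|\alpha_1\|^2$ is not dominated by $\|\bar\partial^*\alpha\|^2$ without an additional Poincar\'e-type inequality that is neither invoked nor available in general. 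So the family $u_\varepsilon$ you hope to extract does not come for free, and the weak-limit step is building on a construction that is not in place. You should delete the fourth paragraph and let your third paragraph stand as the conclusion; semipositivity was already fully handled by the limiting definition of $\Theta^{-1}$ built into~(\ref{eq:3.1.2NakanoCurvatureOfVectorBundlesAndSubbundles2}).

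One small notational point: the residual gradient term in the completion of squares for $(n,1)$-forms is $\|\bar\nabla v\|^2$ (covariant derivative in the antiholomorphic directions), not $\|\nabla^{1,0}\alpha\|^2$ as you wrote; it is dropped anyway, so this does not affect the argument, but it is worth keeping straight.
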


\subsection{}\label{subsection:4.2} Though this statement may not be found in the literature as stated, it can be routinely derived from known available techniques.  Instead of giving a detailed proof of it here, we will only comment on the noteworthy features of the statement and explain the main lines of arguments for its proof.  The most important feature of the statement is that there may be no point in $X$ where the Nakano curvature $\Theta$ of the holomorphic vector bundle $V$ is strictly positive.  At a point $P$ of $X$ the Hermitian form of the Nakano curvature $\Theta$ may be positive when evaluated at some elements $V\otimes T_X$ at $P$ and may be zero at some other elements of $V\otimes T_X$ at $P$.  This may be the situation at every point $P$ of $X$.  The assumption of the finiteness of $\left(\Theta^{-1}F,\,F\right)_{L^2(\Omega)}$ means the vanishing of components of $F$ which correspond to elements of $\left.\left(V\otimes T_X\right)\right|_\Omega$ where the Hermitian form of the Nakano curvature $\Theta$ vanishes.

The key argument of solving the equation $\bar\partial u=F$ on $\Omega$ for a smooth $V$-valued $(n,0)$-form $u$ on $\Omega$ is to use the following basic estimate obtained by completion of squares by integration by parts
$$
\left\|\bar\partial v\right\|_{L^2(\Omega)}^2+\left\|\bar\partial^*v\right\|_{L^2(\Omega)}^2
=\left\|\bar\nabla v\right\|_{L^2(\Omega)}^2+\left({\rm Levi}_{{}_{\partial\Omega}},\,v\wedge\bar v\right)_{L^2(\partial\Omega)}+\left(\Theta v,v\right)_{L^2(\Omega)}
$$
for a test $V$-valued $(n,1)$-form $v$ on $\Omega$ which belongs to the domain of $\bar\partial$ and $\bar\partial^*$ on $\Omega$, where ${\rm Levi}_{{}_{\partial\Omega}}$ is the Levi form $\partial\bar\partial\rho$ of $\partial\Omega$ (with $\rho$ being a smooth function on $\partial\Omega$ defining $\Omega$ as $\Omega=\{r<0\}$ and $d\rho$ identically $1$ on $\partial\Omega$) and $\bar\nabla$ means the covariant differential operator along tangent vectors of type $(0,1)$.
If one has the estimate
\begin{equation}\label{eq:4.2.1}
\left|(v,\,F)_{L^2(\Omega)}\right|\leq \hat C\left\|\bar\partial^*v\right\|_{L^2(\Omega)}
\end{equation}
for all test $V$-valued $(n,1)$-form $v$ on $\Omega$ in the domains of the two operators $\bar\partial$ and $\bar\partial^*$, one can apply Riesz's representation theorem to the ${\mathbb C}$-linear functional
\begin{equation}\label{eq:4.2.2}
\bar\partial^*v\mapsto(v,F)_{L^2(\Omega)}
\end{equation}
with bound $\hat C$ to write the ${\mathbb C}$-linear functional (\ref{eq:4.2.2}) in the form $w\mapsto(w,u)_{L^2(\Omega)}$ for some $u$ with $\left\|u\right\|_{L^2(\Omega)}\leq\hat C$ so that
from
$$
(v,\,F)_{L^2(\Omega)}=(\bar\partial^*v,\,u)_{L^2(\Omega)}=(v,\bar\partial u)_{L^2(\Omega)}
$$
for all test $V$-valued $(n,1)$-form $v$ on $\Omega$ in the domains of $\bar\partial$ and $\bar\partial^*$
it follows that $\bar\partial u=F$ with $\left\|u\right\|_{L^2(\Omega)}\leq \hat C$.
To obtain the estimate (\ref{eq:4.2.1}), since $F$ is $\bar\partial$-closed, it suffices to get the estimate for test $V$-valued $(n,1)$-forms $v$ which are $\bar\partial$-closed.  By the inequality (\ref{eq:3.1.2NakanoCurvatureOfVectorBundlesAndSubbundles2}) of Cauchy-Schwarz type we have
$$
\begin{aligned}
\left|(v,\,F)_{L^2(\Omega)}\right|^2&\leq\left|\left(\Theta^{-1}F,\,F\right)_{L^2(\Omega)}\right|\cdot\left|(\Theta v,\,v)_{L^2(\Omega)}\right|\cr
&\leq\left|\left(\Theta^{-1}F,\,F\right)_{L^2(\Omega)}\right|\left(\left\|\bar\partial v\right\|_{L^2(\Omega)}^2+\left\|\bar\partial^* v\right\|_{L^2(\Omega)}^2\right)\cr
&=\left|\left(\Theta^{-1}F,\,F\right)_{L^2(\Omega)}\right|\cdot\left\|\bar\partial^* v\right\|^2_{L^2(\Omega)}\cr
\end{aligned}
$$
so that the estimate (\ref{eq:4.2.1}) holds with $\hat C=\left|\left(\Theta^{-1}F,\,F\right)_{L^2(\Omega)}\right|^{\frac{1}{2}}$, which according to the assumption is finite.  This finishes the argument, because $\hat C$ is equal to the square root of $C=\left|\left(\Theta^{-1}F,\,F\right)_{L^2(\Omega)}\right|$ and $\left\|u\right\|^2_{L^2(\Omega)}\leq C$.

Here we have left out the routine details about the use of convolution to handle the problem of proving that smooth $V$-valued $(n,1)$-forms on $\Omega$ up to the boundary in domain of $\bar\partial^*$ are dense, with respect to the graph norm, in the space of all $L^2$ $V$-valued $(n,1)$-forms on $\Omega$ in the domains of  $\bar\partial$ and $\bar\partial^*$.

More general statements than that given in Theorem \ref{theorem:4.1VanishingTheoremKodairaNakanoL2EstimatePseudoconvexDomain} hold by the same arguments. For example, the $V$-valued $(n,1)$-form $F$ can be replaced by a $V$-valued $(n,q)$-form with $q\geq 1$ and the condition of $\partial\Omega$ being smooth and strictly pseudoconvex can be weakened.  Here we only give the statement which suffices for our purpose.

\begin{corollary}[Special Case of Semipositive $(1,1)$-Form as Lower Bound]\label{corollary:4.3} In Theorem \ref{theorem:4.1VanishingTheoremKodairaNakanoL2EstimatePseudoconvexDomain} suppose there are a smooth semipositive $(1,1)$-form
$\omega$ on $X$ and a positive $(n,n)$-form $\Phi$ on $X$ such that the Nakano curvature $\Theta$ of $V$ dominates $H\otimes\omega$ on $X$ and $\Phi\omega$ dominates (the nonnegative quadratic form defined by the coefficient of) the fiber trace
$${\rm FbTr}(F)=\sum_{\lambda,\nu=1}^n\left(\sum_{j,k=1}^r H_{j\bar k}F^j_{\bar\nu}\overline{F^k_{\bar\lambda}}\right)(dz^\lambda\wedge d\overline{z^\nu})
\left|dz^1\wedge\cdots\wedge dz^n\right|^2
$$
of the $\bar\partial$-closed $V$-valued $(n,1)$-form $F$ on $\Omega$,
where the components of $F$ with respect to the local fiber coordinates of $V$ are $F^1,\cdots,F^r$ and $$F^j=\sum_{\lambda=1}^n F^j_{\bar\lambda}(dz^1\wedge\cdots\wedge dz^n)\wedge d\overline{z^\lambda}.$$
Let $n_\omega$ be a positive integer $\leq n$ such that the number of nonzero eigenvalues of $\omega$ is $\leq n_\omega$ at every point of $\Omega$. If the integral
$\int_\Omega\Phi$ is finite, then the $\bar\partial$-equation $\bar\partial u=F$ on $\Omega$ can be solved for the unknown smooth $V$-valued $(n,0)$-form $u$ over $\Omega$ such that the square of the $L^2$ norm of $u$ on $\Omega$ with respect to $H$ is $\leq n_\omega\int_\Omega\Phi$.
\end{corollary}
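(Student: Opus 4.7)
The plan is to deduce Corollary~\ref{corollary:4.3} directly from Theorem~\ref{theorem:4.1VanishingTheoremKodairaNakanoL2EstimatePseudoconvexDomain} by establishing the pointwise bound $\left(\Theta^{-1}F,F\right)(P)\leq n_\omega\,\phi(P)$ at every $P\in\Omega$, where $\phi$ denotes the density of $\Phi$ with respect to the K\"ahler volume form used in forming $(\,\cdot\,,\cdot)_{L^2(\Omega)}$. Integration then yields $(\Theta^{-1}F,F)_{L^2(\Omega)}\leq n_\omega\int_\Omega\Phi<\infty$, and Theorem~\ref{theorem:4.1VanishingTheoremKodairaNakanoL2EstimatePseudoconvexDomain} immediately produces the desired $u$ with $\bar\partial u=F$ and $\|u\|^2_{L^2(\Omega)}\leq n_\omega\int_\Omega\Phi$.

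The pointwise bound splits into two steps. First, operator monotonicity: the curvature hypothesis $\Theta\geq H\otimes\omega$ gives $\Theta+\varepsilon I\geq H\otimes\omega+\varepsilon I$, hence $(\Theta+\varepsilon I)^{-1}\leq(H\otimes\omega+\varepsilon I)^{-1}$, and letting $\varepsilon\downarrow 0$ in the sense of \S\ref{subsection:3.1} yields $(\Theta^{-1}F,F)\leq((H\otimes\omega)^{-1}F,F)$ at every point, with the convention that both sides may be $+\infty$. Second, at a fixed point $P$, choose coordinates in which $g_{\lambda\bar\nu}(P)=\delta_{\lambda\nu}$ and $\omega_{\lambda\bar\nu}(P)=\omega_\lambda\delta_{\lambda\nu}$, where $\omega_1,\dots,\omega_{n'_\omega}>0$ and $\omega_{n'_\omega+1}=\cdots=\omega_n=0$ for some $n'_\omega\leq n_\omega$, together with an $H$-orthonormal frame of $V$ at $P$. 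Writing $F^j=\sum_\lambda F^j_{\bar\lambda}(dz^1\wedge\cdots\wedge dz^n)\wedge d\overline{z^\lambda}$ and using the identification of $F$ with an element of $V\otimes T_X$ recalled in \S\ref{subsection:3.1}, the operator $H\otimes\omega$ is diagonal with eigenvalue $\omega_\lambda$ on the $(j,\lambda)$-slot, so
$$\left((H\otimes\omega)^{-1}F,\,F\right)(P)\;=\;\sum_{\lambda:\,\omega_\lambda>0}\frac{1}{\omega_\lambda}\sum_{j=1}^r\bigl|F^j_{\bar\lambda}\bigr|^2.$$

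The third ingredient is the fiber-trace hypothesis. Stripping the common $(n,n)$-form factor $|dz^1\wedge\cdots\wedge dz^n|^2$, the inequality $\Phi\omega\geq\mathrm{FbTr}(F)$ at $P$ becomes the Hermitian matrix inequality $\phi\,\omega_{\lambda\bar\nu}\geq\sum_j F^j_{\bar\nu}\overline{F^j_{\bar\lambda}}$; reading its $\lambda$-th diagonal entry gives $\sum_j|F^j_{\bar\lambda}|^2\leq\phi\,\omega_\lambda$, which in particular forces $F^j_{\bar\lambda}=0$ whenever $\omega_\lambda=0$, so no division by zero occurs in the preceding sum. Substituting,
$$\left((H\otimes\omega)^{-1}F,\,F\right)(P)\;\leq\;\sum_{\lambda:\,\omega_\lambda>0}\phi(P)\;=\;n'_\omega\,\phi(P)\;\leq\;n_\omega\,\phi(P),$$
which combined with the first step completes the pointwise bound. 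The main obstacle is purely notational: one must verify that the $(n,n)$-form normalization implicit in $|dz^1\wedge\cdots\wedge dz^n|^2$ is compatible with the K\"ahler volume form used in the $L^2$ inner product, so that the pointwise count of positive eigenvalues of $\omega$ integrates cleanly to the factor $n_\omega$ in front of $\int_\Omega\Phi$; once conventions are fixed, the argument reduces to the direct diagonal calculation above.
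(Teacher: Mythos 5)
Your argument is correct and follows essentially the same route as the paper's: first replace $\Theta^{-1}$ by $(H\otimes\omega)^{-1}$ via operator monotonicity, then diagonalize $g$, $\omega$, and $H$ at a point and use the fiber-trace domination to bound each of the at most $n_\omega$ nonzero-eigenvalue contributions by the density of $\Phi$, giving the pointwise bound $\langle(H\otimes\omega)^{-1}F,F\rangle_P\leq n_\omega\Phi$ and hence the $L^2$ bound upon integration. The only difference is cosmetic: you make explicit that $F^j_{\bar\lambda}=0$ wherever $\omega_\lambda=0$ so that the $\varepsilon\to 0$ limit is finite, whereas the paper leaves this $0/0$ convention implicit in its $b_\lambda\le a_\lambda$ bookkeeping.
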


\begin{proof}  At any point $P$ of $\Omega$ we can choose local coordinates $(z^1,\cdots,z^n)$ centered at $P$ such that both $\omega$ and $H$ are diagonalized at $P$ and
$$\Phi\,\omega=\left(\sum_{\lambda=1}^{n_\omega} a_\lambda dz^\lambda\wedge d\overline{z^{\lambda}}\right)\left|dz^1\wedge\cdots\wedge dz^n\right|^2
$$
and
$${\rm FrTr}(F)=\left(\sum_{\lambda=1}^{n_\omega} b_\lambda dz^\lambda\wedge d\overline{z^{\lambda}}\right)\left|dz^1\wedge\cdots\wedge dz^n\right|^2$$
at $P$ with $0\leq b_\lambda\leq a_\lambda$ for $1\leq\lambda\leq n_\omega$.  Since the pointwise inner product
$\left<(H\otimes\omega)^{-1}F,\,F\right>_P$ at the point $P$ is equal to
$$
\sum_{\lambda=1}^{n_\omega}\frac{\Phi}{a_\lambda}\sum_{j,k=1}^r H^{\bar k j}F_{\bar k\bar\lambda}\overline{F_{\bar j\bar\lambda}}=\sum_{\lambda=1}^{n_\omega}\frac{b_\lambda\Phi}{a_\lambda}\leq n_\omega\Phi,
$$
it follows that
$$
\begin{aligned}\left(\Theta^{-1}F,\,F\right)_{L^2(\Omega)}&\leq
\left((H\otimes\omega)^{-1}F,\,F\right)_{L^2(\Omega)}\cr
&=\int_{P\in\Omega}\left<(H\otimes\omega)^{-1}F,\,F\right>_P\leq n_\omega\int_\Omega\Phi\cr
\end{aligned}
$$
and the conclusion follows from Theorem \ref{theorem:4.1VanishingTheoremKodairaNakanoL2EstimatePseudoconvexDomain}.
\end{proof}

\begin{corollary}[Solution Estimate from Base Trace of Fiber Trace of Kernel Bundle Valued $(n,1)$-Form]\label{corollary:4.5} In Theorem \ref{theorem:4.1VanishingTheoremKodairaNakanoL2EstimatePseudoconvexDomain} suppose there are a smooth semipositive $(1,1)$-form
$\omega$ on $X$ and a positive smooth $(n,n)$-form $\Phi$ on $X$ such that the Nakano curvature $\Theta$ of $V$ dominates $H\otimes\omega$ on $X$ and at every point of $\Omega$ the null space of $\omega$ is contained in the null space
of the fiber trace ${\rm FbTr}(F)$
of the $\bar\partial$-closed $V$-valued $(n,1)$-form $F$ on $\Omega$.  Let ${\rm Tr}_\omega\left({\rm FbTr}(F)\right)$ be the trace of ${\rm FbTr}(F)$ with respect to $\omega$ (which means the limit, as $\varepsilon\to 0+$, of the trace of ${\rm FbTr}(F)$ with respect to the sum of $\omega$ and $\varepsilon$ times the K\"ahler form of $X$). If the integral
$\int_\Omega{\rm Tr}_\omega\left({\rm FbTr}(F)\right)$ is finite, then
the $\bar\partial$-equation $\bar\partial u=F$ on $\Omega$ can be solved for the unknown smooth $(n,0)$-form $u$ of $V$ over $\Omega$ such that the square of the $L^2$ norm of $u$ on $\Omega$ with respect to $H$ is $\leq\int_\Omega{\rm Tr}_\omega\left({\rm FbTr}(F)\right)$.
\end{corollary}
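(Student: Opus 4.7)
The plan is to reduce Corollary \ref{corollary:4.5} to the setting of Theorem \ref{theorem:4.1VanishingTheoremKodairaNakanoL2EstimatePseudoconvexDomain} by showing that $\left(\Theta^{-1}F,\,F\right)_{L^2(\Omega)}$ is bounded by the finite quantity $\int_\Omega {\rm Tr}_\omega\left({\rm FbTr}(F)\right)$, using the pointwise identification of the latter with $\langle (H\otimes\omega)^{-1}F,\,F\rangle$ together with the operator inequality $\Theta^{-1}\leq (H\otimes\omega)^{-1}$ that follows from the curvature hypothesis $\Theta\geq H\otimes\omega$.

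For the operator-inequality step, the hypothesis $\Theta\geq H\otimes\omega$ gives $\Theta+\varepsilon I\geq H\otimes\omega+\varepsilon I>0$ on $V\otimes T_X$ for every $\varepsilon>0$, and operator monotonicity of inversion yields $(\Theta+\varepsilon I)^{-1}\leq (H\otimes\omega+\varepsilon I)^{-1}$. Evaluating both sides at $F$ pointwise and letting $\varepsilon\to 0^+$ (each regularized quadratic form increases monotonically as $\varepsilon$ decreases, so both sides converge monotonically to the limits defined in {\bf\ref{subsection:3.1}}), one obtains
$$\left(\Theta^{-1}F,\,F\right)_{L^2(\Omega)}\leq\left((H\otimes\omega)^{-1}F,\,F\right)_{L^2(\Omega)}.$$

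For the pointwise identification $\langle(H\otimes\omega)^{-1}F,\,F\rangle_P={\rm Tr}_\omega\left({\rm FbTr}(F)\right)_P$ I would mimic the coordinate computation in the proof of Corollary \ref{corollary:4.3}. Fix $P\in\Omega$ and choose local coordinates centered at $P$ in which the K\"ahler form is Euclidean at $P$ and $\omega$ and $H$ are simultaneously diagonal at $P$, with $\omega=\sum_{\lambda=1}^{n_\omega} a_\lambda\,dz^\lambda\wedge d\overline{z^\lambda}$ where $a_\lambda>0$ for $1\leq\lambda\leq n_\omega$ and $a_\lambda=0$ otherwise, and with ${\rm FbTr}(F)=\left(\sum_\lambda b_\lambda\,dz^\lambda\wedge d\overline{z^\lambda}\right)\left|dz^1\wedge\cdots\wedge dz^n\right|^2$ where $b_\lambda=\sum_{j,k}H_{j\bar k}F^j_{\bar\lambda}\overline{F^k_{\bar\lambda}}\geq 0$. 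The null-space containment hypothesis forces $b_\lambda=0$ whenever $a_\lambda=0$; with this, the regularized trace ${\rm Tr}_{\omega+\varepsilon g}\left({\rm FbTr}(F)\right)$ converges as $\varepsilon\to 0^+$ to $\sum_{\lambda=1}^{n_\omega}\frac{b_\lambda}{a_\lambda}\left|dz^1\wedge\cdots\wedge dz^n\right|^2$, which is exactly the pointwise value of $\langle(H\otimes\omega)^{-1}F,\,F\rangle_P$ computed in the same diagonal frame (and via the same $\varepsilon\to 0^+$ limit). Integrating over $\Omega$ yields $\left((H\otimes\omega)^{-1}F,\,F\right)_{L^2(\Omega)}=\int_\Omega{\rm Tr}_\omega\left({\rm FbTr}(F)\right)$, which is finite by hypothesis.

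Combining the two steps gives $\left(\Theta^{-1}F,\,F\right)_{L^2(\Omega)}\leq\int_\Omega{\rm Tr}_\omega\left({\rm FbTr}(F)\right)<\infty$, and Theorem \ref{theorem:4.1VanishingTheoremKodairaNakanoL2EstimatePseudoconvexDomain} then produces a smooth $V$-valued $(n,0)$-form $u$ on $\Omega$ with $\bar\partial u=F$ and $\left\|u\right\|^2_{L^2(\Omega)}\leq\int_\Omega{\rm Tr}_\omega\left({\rm FbTr}(F)\right)$, which is the desired estimate. The main point requiring care is keeping the two regularization limits (the regularized inverse $(H\otimes\omega+\varepsilon I)^{-1}$ and the regularized trace ${\rm Tr}_{\omega+\varepsilon g}$) consistent and verifying that the null-space containment hypothesis is precisely what makes both limits finite and uniformly integrable; since the bounds match term-by-term in diagonal coordinates and the convergence is monotone, the passage to the limit is ultimately routine, but it is the only step where the hypotheses interact delicately.
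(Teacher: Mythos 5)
Your proof is correct and follows the same route the paper takes: the paper's own proof of Corollary~\ref{corollary:4.5} is the single chain of (in)equalities $\left(\Theta^{-1}F,\,F\right)_{L^2(\Omega)}\leq\left(\left(H\otimes\omega\right)^{-1}F,\,F\right)_{L^2(\Omega)}=\int_\Omega{\rm Tr}_\omega\left({\rm FbTr}(F)\right)$ followed by an appeal to Theorem~\ref{theorem:4.1VanishingTheoremKodairaNakanoL2EstimatePseudoconvexDomain}, and you have simply supplied the pointwise-diagonal verification of the equality and the operator-monotonicity justification of the inequality that the paper leaves implicit.
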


\begin{proof} It follows from Theorem \ref{theorem:4.1VanishingTheoremKodairaNakanoL2EstimatePseudoconvexDomain} and
$$
\left(\Theta^{-1}F,\,F\right)_{L^2(\Omega)}\leq\left(\left(H\otimes\omega\right)^{-1}F,\,F\right)_{L^2(\Omega)}=\int_\Omega{\rm Tr}_\omega\left({\rm FbTr}(F)\right).
$$
\end{proof}

\section{Proof of Skoda's Result on Ideal Generation}\label{section5:ProofOfSkodasResultOnIdealGeneration}

We now present our proof of Skoda's result on ideal generation by using the above three ingredients.

\subsection{\it Setup and Solution of $\bar\partial$ Equation}\label{subsection:5.1}  Let $\Omega$ be a Stein domain spread over ${\mathbb C}^n$ and let $g_1,\cdots,g_p$ be holomorphic functions on $\Omega$ and ${\mathcal K}$ be the kernel subbundle of the bundle-homomorphism $(g_1,\cdots,g_p):\left({\mathcal O}_\Omega\right)^{\oplus p}\to {\mathcal O}_\Omega$.  Let $\psi$ be a plurisubharmonic function on $\Omega$.  Let $f$ be a holomorphic function on $\Omega$.  Let $Z$ be a complex hypersurface in $\Omega$ which contains the common zero-set of $g_1,\cdots,g_p$ such that $\Omega-Z$ is Stein.  On $\Omega-Z$ we can write
\begin{equation}\label{eq:5.1.1}
f=\sum_{j=1}^n\frac{\overline{g_j}}{|g|^2}\,f g_j
\end{equation}
and let
\begin{equation}\label{eq:5.1.2}
F=(F_1,\cdots,F_p)=\left(\bar\partial\left(\frac{\overline{g_1}}{|g|^2}\right),\cdots,\bar\partial\left(\frac{\overline{g_p}}{|g|^2}\right)\right)f.
\end{equation}
Then $F$ is a ${\mathcal K}$-valued $(0,1)$-form on $\Omega-Z$, because $\sum_{j=1}^p F_jg_j\equiv 0$ follows by applying $\bar\partial$ to (\ref{eq:5.1.1}).
We consider solving $\bar\partial u=F$ for $u$ in the kernel bundle ${\mathcal K}$ with $L^2$ estimates on the Stein domain $\Omega-Z$ spread over ${\mathbb C}^n$.  As remarked earlier, when we apply the vanishing theorem in \S\ref{section4:VanishingTheoremAndSolutionOfDBarEquationForNonnegativeNakanoCurvature}, we will simply regard the ${\mathcal K}$-valued $(0,1)$-form $F$ on $\Omega$ naturally as a ${\mathcal K}$-valued $(n,1)$-form on $\Omega$ and regard the ${\mathcal K}$-valued function $u$ on $\Omega$ naturally as a ${\mathcal K}$-valued $(n,0)$-form on $\Omega$.

We find relatively compact subdomains $\Omega_m$ of $\Omega$ with smooth strictly pseudoconvex boundary (for $m\in{\mathbb N}$) such that (i) $\Omega_m$ is relatively compact in $\Omega_{m+1}$ for $m\in{\mathbb N}$ and (ii) the union of all $\Omega_m$ for $m\in{\mathbb N}$ is equal to $\Omega-Z$.  By using smoothing by convolution we can find a smooth plurisubharmonic function $\psi_m$ on an open neighborhood of the closure of $\Omega_m$ in $\Omega_{m+1}$ such that $\psi\leq\psi_{m+1}\leq\psi_m$ on $\Omega_m$ and $\psi_m$ approaches $\psi$ on $\Omega-Z$ as $m\to\infty$.

Let $q=\min(n,p-1)$ and $\gamma>0$.
Assume that
\begin{equation}\label{eq:5.1.3}
\hat C=\int_\Omega\frac{|f|^2e^{-\psi}}{|g|^{2(q+1+\gamma)}}<\infty.
\end{equation}

We are going to apply Corollary \ref{corollary:4.3} to the kernel vector subbundle ${\mathcal K}$ on $\Omega_m$ with the metric
$$
H(\gamma,m)_{j\bar k}:=\frac{H_{j\bar k}e^{-\psi_m}}{\left|g\right|^{2(q+\gamma)}},
$$
where $H_{j\bar k}$ is the metric for ${\mathcal K}$ induced from the standard flat metric of $\left({\mathcal O}_\Omega\right)^{\oplus p}$.  Let $\omega(\gamma)=\gamma\partial\bar\partial\log|g|^2$.  By (\ref{eq:3.4.2CurvatureContributionFromTwistingByLineBundle2}) we have the bound $n_{\omega(\gamma)}\leq p-1$ for the number $n_{\omega(\gamma)}$ defined in Corollary \ref{corollary:4.3} when we do the computation of the number of nonzero eigenvalues of $\omega(\gamma)$ at a point $P$ by using the normal fiber coordinates with $g_1(P)=\cdots=g_{p-1}(P)=0$ and $g_p(P)=1$ as in {\bf{\ref{subsection:3.2}}}.  Since clearly $n_{\omega(\gamma)}$ cannot greater than $n$, it follows that $n_{\omega(\gamma)}\leq q$. By (\ref{eq:3.5.1AnotherExpressionForCurvatureContributionOfTwisting1}) we have
\begin{equation}\label{eq:5.1.4}
\Phi_m\omega(\gamma)={\rm FbTr}(F)\quad{\rm with}\quad\Phi_m=\frac{|f|^2e^{-\psi_m}}{|g|^{2(q+\gamma)}}\frac{1}{\gamma |g|^2},
\end{equation}
where the factor $\frac{|f|^2e^{-\psi_m}}{|g|^{2(q+\gamma)}}$ comes from the metric $\frac{\delta_{jk}e^{-\psi_m}}{|g|^{2(q+\gamma)}}$ of ${\mathcal O}_\Omega^{\oplus p}$ which induces the metric $H(\gamma,m)$ on the subbundle ${\mathcal K}$ of ${\mathcal O}_\Omega^{\oplus p}$.  Here, as remarked earlier, because of the triviality of the canonical line bundle of the Stein domain $\Omega$ spread over ${\mathbb C}^n$, we simply represent naturally the $(n,n)$-form $\Phi_m$ as a function.  By Corollary \ref{corollary:4.3}, (\ref{eq:5.1.3}), and (\ref{eq:5.1.4}), from $\psi\leq\psi_m$ on $\Omega_m$ it follows that there exists some smooth section $u^{(m)}$ of ${\mathcal K}$ over $\Omega_m$ such that $\bar\partial u_m=F$ on $\Omega_m$ with
$$
\begin{aligned}&\int_{\Omega_m}\frac{\sum_{j=1}^p\left|u^{(m)}_j\right|^2 e^{-\psi_m}}{|g|^{2(q+\gamma)}}\leq
n_{\omega(\gamma)}\int_\Omega\Phi_m\cr
&\leq\frac{n_{\omega(\gamma)}}{\gamma}\int_{\Omega_m}\frac{|f|^2e^{-\psi_m}}{|g|^{2(q+1+\gamma)}}=
\frac{n_{\omega(\gamma)}}{\gamma}\hat C\leq \frac{q}{\gamma}\hat C,\cr
\end{aligned}
$$
where $u^{(m)}_j$ is the $j$-component of $u^{(m)}$ when it is regarded as an element of ${\mathcal O}_\Omega^{\oplus p}$ over $\Omega_m$.  By the standard process of using a subsequence $\{m_\nu\}$ of $\{m\}$ to pass to the limit of of $u^{(m_\nu)}$ when $\nu\to\infty$, we conclude that there exists a smooth section $u$ of ${\mathcal K}$ over $\Omega-Z$ such that $\bar\partial u=F$ on $\Omega-Z$ and
$$
\int_{\Omega-Z}\frac{\sum_{j=1}^p|u_j|^2 e^{-\psi}}{|g|^{2(q+\gamma)}}\leq\frac{q}{\gamma}\hat C,
$$
where $u_j$ is the $j$-component of $u$ when it is regarded as an element of ${\mathcal O}_\Omega^{\oplus p}$ over $\Omega-Z$.  Let $h_j=\frac{\bar g_j}{|g|^2}f-u_j$.  Then $h_j$ is a holomorphic section of ${\mathcal K}$ over $\Omega-Z$ with $\sum_{j=1}^p h_jg_j=f$ on $\Omega-Z$ and
$$
\begin{aligned}&
\int_{\Omega-Z}\frac{\sum_{j=1}^p\left|h_j\right|^2 e^{-\psi}}{|g|^{2(q+\gamma)}}\cr
&=\int_{\Omega-Z}\frac{\sum_{j=1}^p\left|\frac{\bar g_j}{|g|^2}f-u_j\right|^2 e^{-\psi}}{|g|^{2(q+\gamma)}}\cr
&\leq 2\int_{\Omega-Z}\frac{\sum_{j=1}^p\left|\frac{\bar g_j}{|g|^2}f \right|^2 e^{-\psi}+\sum_{j=1}^p\left|u_j\right|^2 e^{-\psi}}{|g|^{2(q+\gamma)}}\cr
&=2\int_{\Omega-Z}\frac{\left|f\right|^2 e^{-\psi}}{{|g|^{2(q+1+\gamma)}}}+\int_\Omega\sum_{j=1}^p\frac{\left|u_j\right|^2 e^{-\psi}}{|g|^{2(q+\gamma)}}\cr
&\leq 2\left(1+\frac{q}{\gamma}\right)\,\hat C.\cr
\end{aligned}
$$
We can now extend $h_j$ to be a holomorphic section of ${\mathcal K}$ over $\Omega$ from the $L^2$ estimate.  This finishes our proof of Skoda's result on ideal generation, except for a factor of $2$ in the $L^2$ estimate for the solution $h_1,\cdots,h_p$, which we now discuss.

\subsection{}\label{subsection:5.2} Though it is an insignificant point for our purpose, there is a factor of $2$ which should not be there on the right-hand side of the estimate
$$
\int_\Omega\frac{\sum_{j=1}^p\left|h_j\right|^2 e^{-\psi}}{|g|^{2(q+\gamma)}}
\leq 2\left(1+\frac{q}{\gamma}\right)\,\hat C
$$
in {\bf{\ref{subsection:5.1}}}.
The constant $\gamma$ is related to the constant $\alpha$ in Theorem \ref{theorem:1.1} by $q+1+\gamma=\alpha q+1$ which means $\alpha=1+\frac{\gamma}{q}$ so that $\frac{\alpha}{\alpha-1}=1+\frac{q}{\gamma}$.  In its computation in {\bf{\ref{subsection:5.1}}} the estimate of
$$
\left|2{\rm Re}\left(\frac{\bar g_j}{|g|^2}f\, u_j\right)\right|
\leq\left|\frac{\bar g_j}{|g|^2}f\right|^2+\left|u_j\right|^2
$$
can be replaced by the better inequality of
$$
\left|2{\rm Re}\left(\frac{\bar g_j}{|g|^2}f\,\overline{u_j}\right)\right|
\leq\beta\left|\frac{\bar g_j}{|g|^2}f\right|^2+\frac{1}{\beta}\left|u_j\right|^2
$$
with $\beta=\sqrt{\frac{q}{\gamma}}$ to give the smaller bound of $\left(1+\sqrt{\frac{q}{\gamma}}\right)^2$ which is still greater than $1+\frac{q}{\gamma}$.

One possible way to optimize better the bound is to use the fact that since each solution section $u^{(m)}=\left(u^{(m)}_1,\cdots,u^{(m)}_p\right)$ of the bundle ${\mathcal K}$ over $\Omega_m$ is obtained from the Riesz representation theorem, the section $u^{(m)}$ is orthogonal to the kernel of the $\bar\partial$ operator on $\Omega_m$ with respect to the metric $H(\gamma,m)_{j\bar k}$ on $\Omega^{(m)}$.  In this note we will not go further into the question of optimization of the constant in the integral bound of the solution of ideal generation.

\section{Variants of Skoda's Ideal Generation}\label{section6:VariantsOfSkodasIdealGeneration}

From our proof variants of Skoda's ideal generation result can readily be obtained by different choices of the weight function (which is the twisting by
the metric of the trivial line bundle).  Here we give three examples in the following theorem.  The example in Theorem \ref{theorem:6.1}(b) yields Th\'eor\`eme 2 on p.571 in Skoda's paper \cite{Skoda1972} with the careful choice of $\varphi$ given there.

\begin{theorem}[Variants of Skoda's Ideal Generation]\label{theorem:6.1}
Let $\Omega$ be a domain spread over ${\mathbb C}^n$ which is
Stein. Let $\psi$ be a plurisubharmonic function on $\Omega$,
$g_{1},\ldots ,g_{p}$ be holomorphic functions on $\Omega$ (not all identically zero), $\gamma
>0$, $q=\min \left(n,p-1\right)$, $|g|^2=\sum_{j=1}^p\left|g_j\right|^2$, and $f$ be a holomorphic function on $\Omega$.

\begin{enumerate}[(a)]\item If the integral
$$
C_1:=\int_\Omega\frac{|f|^2\left(|g|^2+q(1+|g|^2)\right)e^{-\psi}}{|g|^{2(q+2)}(1+|g|^2)}
$$
is finite, then there exist holomorphic functions $h_1,\ldots ,h_p$ on
$\Omega$ with $f=\sum_{j=1}^p h_jg_j$ on $\Omega$ such that
$$
\int_\Omega\frac{\sum_{j=1}^p\left|h_j\right|^2 e^{-\psi}}{|g|^{2q}(1+|g|^2)}\leq 2C_1.
$$
\item Let $\varphi$ be a smooth plurisubharmonic function on $\Omega$ such that $\omega:=\partial\bar\partial\varphi$ is
not identically zero on $\Omega$.  If the integral
$$
C_2:=\int_\Omega\frac{|f|^2(1+\Delta_\omega\log|g|^2)e^{-(\varphi+\psi)}}{|g|^{2(q+1)}}
$$
is finite, then there exist holomorphic functions $h_1,\ldots ,h_p$ on
$\Omega$ with $f=\sum_{j=1}^p h_jg_j$ on $\Omega$ such that
$$
\int_\Omega\frac{\sum_{j=1}^p\left|h_j\right|^2 e^{-(\varphi+\psi)}}{|g|^{2q}}\leq 2C_2,
$$
where $\Delta_\omega\log|g|^2$ means the Laplacian of the function $\log|g|^2$ with respect to $\omega$ (which means the
limit as $\varepsilon\to 0+$ of the Laplacian of the function $\log|g|^2$ with respect to the sum of $\omega$ and $\varepsilon$ times
the standard Euclidean K\"ahler form of ${\mathbb C}^n$).
\item If $|g_j|<1$ on $\Omega$ for $1\leq j\leq p$ and
$$
C_3:=\int_\Omega\frac{|f|^2\left(\left(\log\left(\frac{1}{|g|^2}\right)\right)+q\left(\log\left(\frac{1}{|g|^2}\right)\right)^2\right)e^{-\psi}}{|g|^{2(q+1)}}
$$
is finite,
then there exist holomorphic functions $h_1,\ldots ,h_p$ on
$\Omega$ with $f=\sum_{j=1}^p h_jg_j$ on $\Omega$ such that
$$\int_\Omega\frac{\sum_{j=1}^p\left|h_j\right|^2\left(\log\left(\frac{1}{|g|^2}\right)\right) e^{-\psi}}{|g|^{2q}}
\leq 2 C_3.
$$
\end{enumerate}
\end{theorem}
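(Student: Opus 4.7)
The plan is to adapt the three-step proof of Theorem~\ref{theorem:1.1} in Section~\ref{section5:ProofOfSkodasResultOnIdealGeneration}---set up the twisted kernel-bundle metric, verify Nakano semipositivity, apply the vanishing theorem to the explicit $F = f\,\bar\partial(\overline{g_j}/|g|^2)$, and extract $h_j$ from the $\bar\partial$-solution---separately to each of (a), (b), (c), changing only the weight factor that multiplies $|g|^{-2q}$ on $\mathcal{O}_\Omega^{\oplus p}$. In every case the factor $|g|^{-2q}$ by itself provides the borderline cancellation of Proposition~\ref{proposition:3.4.3} via the Cauchy--Schwarz inequality for tensors (Proposition~\ref{Proposition:2.1CauchySchwarzInequalityForTensors}) with special factor $\min(n,p-1)=q$; the additional factor in each weight then supplies the positivity needed to apply Corollary~\ref{corollary:4.3} or Corollary~\ref{corollary:4.5} on an exhaustion $\{\Omega_m\}$ of $\Omega - Z$. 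Finally, setting $h_j = \overline{g_j}\,f/|g|^2 - u_j$ yields the stated bound via the triangle inequality, the factor~$2$ arising exactly as in \S\ref{subsection:5.2}.

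For part~(a), I take the weight $\frac{e^{-\psi}}{|g|^{2q}(1+|g|^2)}$: the factor $(1+|g|^2)^{-1}$ contributes the additional nonnegative curvature $\partial\bar\partial\log(1+|g|^2)$ since $\log(1+|g|^2)$ is plurisubharmonic, and I apply Corollary~\ref{corollary:4.5} with $\omega$ a suitable positive combination of $\partial\bar\partial\log|g|^2$ and $\partial\bar\partial\log(1+|g|^2)$, so that $\mathrm{Tr}_\omega(\mathrm{FbTr}(F))$ reproduces the integrand $|f|^2(|g|^2+q(1+|g|^2))e^{-\psi}/(|g|^{2(q+2)}(1+|g|^2))$ of~$C_1$; the numerator $|g|^2+q(1+|g|^2)$ packages the two curvature contributions. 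For part~(b), I take the weight $\frac{e^{-(\varphi+\psi)}}{|g|^{2q}}$ and choose $\omega := \partial\bar\partial\varphi$ in Corollary~\ref{corollary:4.5}: since $\varphi$ is plurisubharmonic, the twist $e^{-\varphi}$ adds exactly $\omega$ to the curvature, giving $\Theta\geq H\otimes\omega$, and $\mathrm{Tr}_\omega(\mathrm{FbTr}(F))$ equals $|f|^2(1+\Delta_\omega\log|g|^2)e^{-(\varphi+\psi)}/|g|^{2(q+1)}$, whose integral is~$C_2$.

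For part~(c), with weight $\frac{e^{-\psi}\log(1/|g|^2)}{|g|^{2q}}$, the main obstacle is that $-\log L$ for $L:=\log(1/|g|^2)$ is not plurisubharmonic, so the extra factor $L$ does not obviously contribute nonnegative curvature. The key computation, however, produces the nonnegative decomposition
\[
-\partial\bar\partial\log L \;=\; \frac{\partial\bar\partial\log|g|^2}{L} \;+\; \frac{\partial\log|g|^2\wedge\bar\partial\log|g|^2}{L^2},
\]
in which the first summand is nonnegative because $\log|g|^2$ is plurisubharmonic and the second is nonnegative as an outer product of a $(1,0)$-form with its conjugate. Combined with the borderline cancellation from $|g|^{-2q}$ (Proposition~\ref{proposition:3.4.3}), this yields the Nakano semipositivity bound $\Theta\geq H\otimes\bigl(\partial\bar\partial\log|g|^2\bigr)/L$. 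Applying Corollary~\ref{corollary:4.3} with $\omega = \partial\bar\partial\log|g|^2/L$ and $\Phi = |f|^2 e^{-\psi} L^2/|g|^{2(q+1)}$ (so $\Phi\,\omega = \mathrm{FbTr}(F)$) and using $n_\omega\leq q$, I obtain $(\Theta^{-1}F,F)_{L^2} \leq q\int |f|^2 e^{-\psi} L^2/|g|^{2(q+1)}$; this accounts for the $qL^2$ contribution to $C_3$, while the linear $L$ contribution arises separately from estimating $\overline{g_j}\,f/|g|^2$ in the target norm $L\,e^{-\psi}/|g|^{2q}$ during the triangle-inequality step, exactly as in \S\ref{subsection:5.1}.
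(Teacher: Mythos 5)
Your overall route matches the paper's: the same ansatz $F=f\,\bar\partial(\overline{g_j}/|g|^2)$, the same role of $|g|^{-2q}$ (borderline Nakano semipositivity via Proposition~\ref{proposition:3.4.3}), the same choice of extra weight factor in each part, an exhaustion $\{\Omega_m\}$ of $\Omega-Z$, and the triangle inequality giving the factor $2$ as in \S\ref{subsection:5.2}. Your part~(c) is correctly accounted: the $qL^2$ term comes from the $L^2$ bound on the $\bar\partial$-solution and the linear $L$ term from the ansatz $\overline{g_j}f/|g|^2$.

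But in parts~(a) and~(b) you misattribute the two pieces of $C_1$ and $C_2$. You claim $\mathrm{Tr}_\omega(\mathrm{FbTr}(F))$ reproduces the \emph{entire} integrand of $C_1$ (resp.\ equals $|f|^2(1+\Delta_\omega\log|g|^2)e^{-(\varphi+\psi)}/|g|^{2(q+1)}$ for $C_2$). This is not the case: with the weight $e^{-(\varphi+\psi)}/|g|^{2q}$ one has $\mathrm{FbTr}(F)=\frac{|f|^2 e^{-(\varphi+\psi)}}{|g|^{2(q+1)}}\,\partial\bar\partial\log|g|^2$, so $\mathrm{Tr}_\omega(\mathrm{FbTr}(F))=\frac{|f|^2\,\Delta_\omega\log|g|^2\,e^{-(\varphi+\psi)}}{|g|^{2(q+1)}}$ only; the ``$1$'' in $1+\Delta_\omega\log|g|^2$ is exactly the weighted norm $\sum_j|\overline{g_j}f/|g|^2|^2\cdot e^{-(\varphi+\psi)}/|g|^{2q}=|f|^2 e^{-(\varphi+\psi)}/|g|^{2(q+1)}$ of the ansatz, not a trace term. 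Likewise in~(a) the summand $|g|^2$ in $|g|^2+q(1+|g|^2)$ comes from the ansatz (its weighted norm is $|f|^2 e^{-\psi}/(|g|^{2(q+1)}(1+|g|^2))$), while only the $q(1+|g|^2)$ part comes from the $L^2$ bound on $u$ --- these are not ``two curvature contributions.'' If you took your claims at face value, the triangle-inequality step would then produce a bound of $2C_i$ plus an extra ansatz term, overshooting the stated estimate. The fix is the same bookkeeping you already used in~(c). Separately, in part~(a) ``a suitable positive combination of $\partial\bar\partial\log|g|^2$ and $\partial\bar\partial\log(1+|g|^2)$'' is vague and not what works: one should take $\omega=\partial\bar\partial\log(1+|g|^2)$ itself (which the twist by $(1+|g|^2)^{-1}$ supplies directly, so $\Theta\geq H\otimes\omega$), or, as the paper does with Corollary~\ref{corollary:4.3}, $\omega=\frac{|g|^2}{1+|g|^2}\partial\bar\partial\log|g|^2$ and $\Phi=|f|^2 e^{-\psi}/|g|^{2q+4}$; either choice leads to the $q|f|^2 e^{-\psi}/|g|^{2(q+2)}$ bound and hence $C_1$ after adding the ansatz term.
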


\begin{proof} For the proof of Part (a), since
$$
\begin{aligned}\partial\bar\partial\log(1+|g|^2)&=\frac{|g|^2\partial\bar\partial\log|g|^2+\sum_{k=1}^p\partial g_k\wedge\overline{\partial g_k}}{(1+|g|^2)^2}
\cr
&\geq\frac{|g|^2\partial\bar\partial\log|g|^2}{1+|g|^2},\cr
\end{aligned}
$$
it follows that Corollary \ref{corollary:4.3} can be applied to the subbundle ${\mathcal K}$ of $\left({\mathcal O}_\Omega\right)^{\oplus p}$ with the metric induced by the standard flat metric of
$\left({\mathcal O}_\Omega\right)^{\oplus p}$ multiplied by the weight function
$$
\frac{|f|^2e^{-\psi}}{|g|^{2q}(1+|g|^2)}
$$
so that the function $\Phi$ can be chosen to be
$$
\frac{|f|^2 e^{-\psi}}{|g|^{2q+4}}
$$
to solve the equation $\bar\partial u=F$ for a smooth section of ${\mathcal K}$ on $\Omega$ with $F$ given by (\ref{eq:5.1.2})
as described in {\bf{\ref{subsection:5.1}}}.

The proof of Part (b) follows immediately from Corollary \ref{corollary:4.5} with the same meaning for $\omega$ as in Corollary \ref{corollary:4.5} and with the fiber trace ${\rm FbTr}(F)$ of
$F$ of (\ref{eq:5.1.2}) being equal to $\frac{1}{|g|^2}\Delta_\omega\log|g|^2$.

For the proof of Part (c), since
$$
\begin{aligned}
-\partial\bar\partial\log\log\left(\frac{1}{|g|^2}\right)&=
\frac{\partial\bar\partial\log|g|^2}{\log\left(\frac{1}{|g|^2}\right)}+\frac{\left(\partial\log|g|^2\right)\wedge\left(\bar\partial\log|g|^2\right)}{\left(\log\left(\frac{1}{|g|^2}\right)\right)^2}\cr
&\geq\frac{\partial\bar\partial\log|g|^2}{\log\left(\frac{1}{|g|^2}\right)},\cr
\end{aligned}
$$
it follows that Corollary \ref{corollary:4.3} can be applied to the subbundle ${\mathcal K}$ of $\left({\mathcal O}_\Omega\right)^{\oplus p}$ with the metric induced by the standard flat metric of
$\left({\mathcal O}_\Omega\right)^{\oplus p}$ multiplied by the weight function
$$
\frac{\left(\log\left(\frac{1}{|g|^2}\right)\right)e^{-\psi}}
{|g|^{2q}}
$$
so that the function $\Phi$ can be chosen to be
$$
\frac{1}{|g|^2}\frac{\left(\log\left(\frac{1}{|g|^2}\right)^2\right)e^{-\psi}}{|g|^{2q}}|f|^2
$$
to solve the equation $\bar\partial u=F$ for a smooth section of ${\mathcal K}$ on $\Omega$ with $F$ given by (\ref{eq:5.1.2})
as described in {\bf{\ref{subsection:5.1}}}.
\end{proof}

\begin{remark} Theorem \ref{theorem:6.1}(b) can be applied to any $\varphi=\frac{1}{(1+|z|^2)^m}$ (for $m\in{\mathbb N}$) or $\varphi=e^{-|z|^2}$ or any other global
smooth strictly plurisubharmonic function on ${\mathbb C}^n$, but the use of any weight function $e^{-\varphi}$ with $\varphi$ strictly plurisubharmonic defeats the original purpose
of applying vanishing theorems and $L^2$ estimates of $\bar\partial$ without the Nakano curvature being strictly positive at any point.  Unfortunately, unlike Theorem \ref{theorem:1.1} (with Theorem \ref{theorem:1.2} as its algebraic geometric formulation), none of the three parts of Theorem \ref{theorem:6.1} can be formulated and used in an algebraic geometric setting.
\end{remark}

\section{Comparison with Original Proof of Skoda's Ideal Generation}\label{section7:ComparisonWithOriginalProofOfSkodasIdealGeneration}

The proof given here is simpler and more straightforward than Skoda's original proof, because (i) we can quote directly well-developed known techniques and geometric notions  and (ii) we can use normal coordinates to simplify the computations on account of the coordinate-independence of the geometric entities involved.  However, behind the facade of geometric and analytic arguments, the manipulations in computation between our proof and Skoda's original proof can be put in parallel correspondence, with expressions in our proof neater and simpler than in Skoda's proof due to the use of normal coordinates.  For the discussion on the comparison, we start out with the following Cauchy-Schwarz inequality (with a special factor) for tensors more general than those in Proposition \ref{Proposition:2.1CauchySchwarzInequalityForTensors}.

\begin{proposition}[Cauchy-Schwarz Inequality for More General Tensors]\label{proposition:7.1}
Let ${\vec a}\in{\mathbb C}^p$.  Let ${\mathbf b}$ and ${\mathbf c}$ be elements of ${\mathbb C}^p\otimes{\mathbb C}^n$.  Let $q=\min(n,p-1)$.  Then
$$\left|\left<\vec a\wedge{\mathbf b},\,\vec a\wedge{\mathbf c}
\right>_{{}_{\left(\bigwedge^2{\mathbb C}^p\right)\times{\mathbb C}^n}}\right|^2
\leq
q\,\left\|\vec a\right\|^2_{{}_{{\mathbb C}^p}}
\left\|\left<\vec a\wedge{\mathbf b},\,{\mathbf c}\right>_{{}_{{\mathbb C}^n}}\right\|^2_{{}_{\left(\bigwedge^2{\mathbb C}^p\right)\times{\mathbb C}^p}}.
$$
Here (i) $\vec a\wedge{\mathbf b}$ is the wedge product of $\vec a$
separately with each of the $n$ elements of ${\mathbb C}^p$ defined by ${\mathbf b}$ to end up with an element of
$\left(\bigwedge^2{\mathbb C}^p\right)\times{\mathbb C}^n$ and (ii) the inner product
$\left<\vec a\wedge{\mathbf b},\,{\mathbf c}\right>_{{}_{{\mathbb C}^n}}$ is taken when
the element $\vec a\wedge{\mathbf b}$ of $\left(\bigwedge^2{\mathbb C}^p\right)\times{\mathbb C}^n$ is regarded as
an $\left(\bigwedge^2{\mathbb C}^p\right)$-valued element of ${\mathbb C}^n$ and the element ${\mathbf c}$ is regarded
as a ${\mathbb C}^p$-valued element of
${\mathbb C}^n$ so that $\left<\vec a\wedge{\mathbf b},\,{\mathbf c}\right>_{{}_{{\mathbb C}^n}}$ becomes an element of
$\left(\bigwedge^2{\mathbb C}^p\right)\times{\mathbb C}^p$.
\end{proposition}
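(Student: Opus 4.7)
The plan is to reduce the inequality directly to the rank-two Cauchy--Schwarz inequality of Proposition \ref{Proposition:2.1CauchySchwarzInequalityForTensors} by normalizing $\vec a$ to lie along a coordinate axis. If $\vec a = 0$ both sides vanish, so I may assume $\vec a \neq 0$. Since a unitary change of basis of $\mathbb{C}^p$ preserves every inner product and norm appearing in the inequality (on $\mathbb{C}^p$, on $\bigwedge^2\mathbb{C}^p$, and hence on the relevant tensor products $(\bigwedge^2\mathbb{C}^p)\otimes\mathbb{C}^n$ and $(\bigwedge^2\mathbb{C}^p)\otimes\mathbb{C}^p$), I would change basis so that $\vec a = \|\vec a\|\,e_p$, where $e_1,\dots,e_p$ denote the standard basis vectors of $\mathbb{C}^p$.

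Under this normalization the $p$-th row of the $p\times n$ matrices $\mathbf{b}$ and $\mathbf{c}$ is annihilated by taking the wedge with $\vec a$, so the effective data are the truncated $(p-1)\times n$ matrices $\tilde{\mathbf{b}}$ and $\tilde{\mathbf{c}}$ in $\mathbb{C}^{p-1}\otimes\mathbb{C}^n$ obtained by deleting that row. Using the fact that $\{e_p\wedge e_j\}_{j=1}^{p-1}$ is orthonormal in $\bigwedge^2\mathbb{C}^p$, I would verify by direct expansion the two identities
$$
\langle \vec a\wedge\mathbf{b},\,\vec a\wedge\mathbf{c}\rangle_{(\bigwedge^2\mathbb{C}^p)\otimes\mathbb{C}^n} = \|\vec a\|^2\,\langle \tilde{\mathbf{b}},\,\tilde{\mathbf{c}}\rangle_{\mathbb{C}^{p-1}\otimes\mathbb{C}^n},
$$
$$
\bigl\|\langle \vec a\wedge\mathbf{b},\,\mathbf{c}\rangle_{\mathbb{C}^n}\bigr\|^2_{(\bigwedge^2\mathbb{C}^p)\otimes\mathbb{C}^p} = \|\vec a\|^2 \sum_{j=1}^{p-1}\sum_{\ell=1}^p\Bigl|\sum_{k=1}^n b_{j,k}\overline{c_{\ell,k}}\Bigr|^2,
$$
which together reduce the problem to bounding $|\langle \tilde{\mathbf{b}},\tilde{\mathbf{c}}\rangle|^2$ by the double sum on the right with a factor of $q$.

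Finally I would invoke Proposition \ref{Proposition:2.1CauchySchwarzInequalityForTensors} directly for the pair $\tilde{\mathbf{b}},\tilde{\mathbf{c}} \in \mathbb{C}^{p-1}\otimes\mathbb{C}^n$, which supplies the factor $\min(p-1,n)=q$, and then observe that its right-hand side (a sum over $j,\ell\in\{1,\dots,p-1\}$) is majorized by extending the $\ell$-summation to $\ell\in\{1,\dots,p\}$. The \emph{conceptual} point is that the wedge with $\vec a$ effectively lowers the rank of the ambient $\mathbb{C}^p$-factor by one, which is exactly why the factor $\min(n,p-1)$ --- rather than $\min(n,p)$ --- arises naturally here, mirroring the role of $q=\min(n,p-1)$ in Skoda's exponent. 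The only nontrivial ingredient invoked is Proposition \ref{Proposition:2.1CauchySchwarzInequalityForTensors}; the rest is careful bookkeeping of indices under the unitary normalization, and I do not anticipate any serious obstacle.
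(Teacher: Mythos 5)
Your proposal is correct and follows essentially the same route as the paper, which derives Proposition~\ref{proposition:7.1} from Proposition~\ref{Proposition:2.1CauchySchwarzInequalityForTensors} by a unitary change of basis in ${\mathbb C}^p$ that brings $\vec a$ onto the last coordinate axis. Your version spells out the two reduction identities and the small point that extending the $\ell$-summation from $\{1,\dots,p-1\}$ to $\{1,\dots,p\}$ only enlarges the right-hand side, a detail the paper leaves implicit.
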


\subsection{\it Equivalence of Proposition \ref{proposition:7.1} and Proposition \ref{Proposition:2.1CauchySchwarzInequalityForTensors}}  Proposition \ref{proposition:7.1} is derived from Proposition \ref{Proposition:2.1CauchySchwarzInequalityForTensors} simply by choosing an orthonormal basis in ${\mathbb C}^p$ with respect to which $\vec a$ becomes
the element $(0,\cdots,0,1)$ of ${\mathbb C}^p$.  On the other hand, Proposition \ref{Proposition:2.1CauchySchwarzInequalityForTensors} is derived from Proposition \ref{proposition:7.1} simply by choosing $r=p-1$ and $\vec a=(0,\cdots,0,1)\in{\mathbb C}^p$ and ${\mathbf S}=\vec a\wedge{\mathbf b}$ and ${\mathbf T}=\vec a\wedge{\mathbf c}$.

\begin{remark} In terms of the formulation of tensors with indices,
when the tensors ${\mathbf b}$ and ${\mathbf c}$ are represented by $p\times n$ matrices
$${\mathbf b}=\left(b_{\ell,k}\right)_{1\leq\ell\leq p,\,1\leq k\leq n}\quad{\rm and}\quad
{\mathbf c}=\left(c_{\ell,k}\right)_{1\leq\ell\leq p,\,1\leq k\leq n},
$$
the inequality in Proposition \ref{proposition:7.1} becomes
$$
\begin{aligned}&\left|\sum_{1\leq j<\ell\leq p}\sum_{k=1}^n
\left(a_j\,b_{\ell,k}-a_\ell\,b_{j,k}\right)\overline{\left(a_j\,c_{\ell,k}-a_\ell\,c_{j,k}\right)}
\right|^2\cr
&\leq q\left(\sum_{j=1}^p|a_j|^2\right)\sum_{\ell=1}^p\sum_{1\leq m<j\leq p}
\left|\sum_{k=1}^n\left(a_m\,b_{j,k}-a_j\,b_{m,k}\right)\overline{c_{\ell,k}}\right|^2,\cr
\end{aligned}
$$
because
$$
\vec a\wedge{\mathbf b}=\left(a_j\,b_{\ell,k}-a_\ell\,b_{j,k}\right)_{1\leq j<\ell\leq p,\,1\leq k\leq n}.
$$
Since
$$
\begin{aligned}&\sum_{1\leq j<\ell\leq p}\sum_{k=1}^n
\left(a_j\,b_{\ell,k}-a_\ell\,b_{j,k}\right)\overline{\left(a_j\,c_{\ell,k}-a_\ell\,c_{j,k}\right)}\cr
&=\sum_{1\leq j,\ell\leq p}\sum_{k=1}^n
\left(a_j\,b_{\ell,k}-a_\ell\,b_{j,k}\right)\overline{a_j}\,\overline{c_{\ell,k}}
\end{aligned}
$$
(from the fact that the inner product of a $2$-tensor $C$ with a skew-symmetric $2$-tensor $B$ is unchanged when
$C$ is replaced by its skew-symmetrization), by replacing $c_{\ell,k}$ by its complex-conjugate
we conclude that the inequality in Proposition \ref{proposition:7.1} is equivalent to
$$
\begin{aligned}&\left|\sum_{j,\ell=1}^p\sum_{k=1}^n
\overline{a_j}\left(a_j\,b_{\ell,k}-a_\ell\,b_{j,k}\right)c_{\ell,k}
\right|^2\cr
&\leq q\left(\sum_{j=1}^p|a_j|^2\right)\sum_{\ell=1}^p\sum_{1\leq m<j\leq p}
\left|\sum_{k=1}^n\left(a_m\,b_{j,k}-a_j\,b_{m,k}\right)c_{\ell,k}\right|^2,\cr
\end{aligned}
$$
which is precisely the inequality in Lemme 1 on p.552 of \cite{Skoda1972}.
\end{remark}

We give in Proposition \ref{Proposition:3.2NakanoCurvatureOfKernelSubbundleInNormalFiberCoordinate} the formula for the Nakano curvature of the kernel subbundle ${\mathcal K}$ in normal fiber coordinates.  Now for comparison between our proof and Skoda's original proof, we give the formula for it in general fiber coordinates as follows.

\begin{proposition}[Nakano Curvature of Kernel Subbundle]\label{proposition:7.4}  Let $g_1,\cdots,g_p$ be holomorphic functions on a domain $\Omega$ in ${\mathbb C}^n$ and let ${\mathcal K}$ be the kernel of the bundle-homomorphism $(g_1,\cdots,g_p):\left({\mathcal O}_\Omega\right)^{\oplus p}\to {\mathcal O}_\Omega$ which is given the metric induced from the standard flat metric of the trivial ${\mathbb C}$-vector bundle $\left({\mathcal O}_\Omega\right)^{\oplus p}$ of rank $p$.
The Nakano curvature of ${\mathcal K}$ is given by
$$
\begin{aligned}
\left(v^{j\lambda}\right)_{1\leq j\leq p, 1\leq\lambda\leq n}
&\mapsto\sum_{1\leq j,k\leq p,1\leq\lambda,\nu\leq n}\Theta_{j\bar k\lambda\bar\nu}v^{j\lambda}\overline{v^{k\nu}}
\cr
&=-\frac{1}{{\left(\sum_{j=1}^p|g_j|^2\right)^3}}\left|\sum_{j,\ell=1}^p\sum_{\lambda=1}^n\overline{g_\ell}
\left(g_\ell(\partial_\lambda g_j) -g_j(\partial_\lambda
g_\ell)\right)v^{j\lambda}\right|^2\cr
\end{aligned}
$$
for $\left(v^{j\lambda}\right)_{1\leq j\leq p, 1\leq\lambda\leq n}$ satisfying
$\sum_{j=1}^p g_jv^{j\lambda}=0$ for $1\leq\lambda\leq n$.
\end{proposition}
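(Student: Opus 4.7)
The plan is to reduce to Proposition \ref{Proposition:3.2NakanoCurvatureOfKernelSubbundleInNormalFiberCoordinate} at the point $P$ by combining a constant unitary change of basis in the fiber $\mathbb{C}^p$ of $\left({\mathcal O}_\Omega\right)^{\oplus p}$ with a scalar rescaling of $g=(g_1,\ldots,g_p)$, after first using the kernel constraint to simplify the right-hand side to a far more manageable expression.

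The simplification comes from expanding
$$\sum_{j,\ell=1}^p \overline{g_\ell}\bigl(g_\ell\partial_\lambda g_j - g_j\partial_\lambda g_\ell\bigr) v^{j\lambda} = |g|^2 \sum_{j=1}^p (\partial_\lambda g_j) v^{j\lambda} - \Bigl(\sum_{\ell=1}^p \overline{g_\ell}\,\partial_\lambda g_\ell\Bigr) \sum_{j=1}^p g_j v^{j\lambda};$$
the second term vanishes by the hypothesis $\sum_j g_j v^{j\lambda}=0$, so the asserted identity is equivalent, on the kernel, to
$$\sum_{j,k,\lambda,\nu} \Theta_{j\bar k\lambda\bar\nu}\, v^{j\lambda}\overline{v^{k\nu}} = -\frac{1}{|g|^2}\Bigl|\sum_{j,\lambda}(\partial_\lambda g_j) v^{j\lambda}\Bigr|^2.$$
Both sides of this simplified identity are invariant under (i) any constant $U\in U(p)$ acting on the fiber $\mathbb{C}^p$, under which $g$ transforms as a linear functional by $\tilde g_k = \sum_j U_{jk} g_j$ while $v$ transforms as a vector by $\tilde v^k = \sum_j \overline{U_{jk}} v^j$, and (ii) replacement of $g$ by $cg$ for any nonzero $c\in\mathbb{C}$ (which leaves $v$ and the subbundle ${\mathcal K}$ unchanged). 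The left-hand side is invariant because ${\mathcal K}$ and its induced metric from the flat metric of $\left({\mathcal O}_\Omega\right)^{\oplus p}$ are preserved; invariance of the right-hand side under (i) follows from $\sum_k U_{jk}\overline{U_{j'k}} = \delta_{jj'}$ together with $|\tilde g|^2 = |g|^2$, and under (ii) the factors of $c$ cancel between the denominator $|cg|^2 = |c|^2|g|^2$ and the numerator $|c\sum(\partial g_j)v^{j\lambda}|^2$.

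Given these invariances, at a fixed $P$ with $g(P)\neq 0$ I would choose $U\in U(p)$ sending the vector $g(P)$ to $(0,\ldots,0,|g(P)|)$ and then rescale by $1/|g(P)|$; the resulting functions $\hat g_j$ satisfy $\hat g_j(P)=\delta_{jp}$, placing us exactly in the hypothesis of Proposition \ref{Proposition:3.2NakanoCurvatureOfKernelSubbundleInNormalFiberCoordinate}. The kernel constraint at $P$ forces $\hat v^{p\lambda}(P)=0$ and we have $|\hat g(P)|^2=1$, so the conclusion of that proposition, namely $-\bigl|\sum_{j\leq p-1,\,\lambda}(\partial_\lambda\hat g_j)\hat v^{j\lambda}\bigr|^2$, coincides with the simplified right-hand side evaluated in these coordinates; transferring back via (i) and (ii) then yields the identity in the original coordinates. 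The main subtle point is the differing transformation laws of $g$ (via $U^t$) versus $v$ (via $U^*$) under the unitary change of fiber basis, but once the kernel simplification is in hand each individual verification is straightforward algebra.
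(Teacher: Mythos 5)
Your proof is correct and takes essentially the same route as the paper's: reduce to Proposition \ref{Proposition:3.2NakanoCurvatureOfKernelSubbundleInNormalFiberCoordinate} by a constant unitary change of fiber basis under which both the curvature quadratic form and the asserted right-hand side are invariant. Two small differences are worth noting: the paper verifies invariance of the full $|g|^{-6}$ expression directly, whereas your preliminary use of the kernel constraint $\sum_j g_j v^{j\lambda}=0$ to collapse it to $-|g|^{-2}\bigl|\sum_{j,\lambda}(\partial_\lambda g_j)v^{j\lambda}\bigr|^2$ makes the invariance check cleaner; and you explicitly add the scalar rescaling $g\mapsto cg$ to reach $\hat g_p(P)=1$, filling in a step the paper passes over (a unitary alone only achieves $\tilde g_p(P)=|g(P)|$, not $1$, while Proposition \ref{Proposition:3.2NakanoCurvatureOfKernelSubbundleInNormalFiberCoordinate} is stated with $g_p(P)=1$). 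Both refinements are correct and improve the exposition without changing the underlying argument.
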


\begin{proof} Proposition \ref{Proposition:3.2NakanoCurvatureOfKernelSubbundleInNormalFiberCoordinate} is the special case where $g_1=\cdots=g_{p-1}=0$ and $g_p=1$ at the point under consideration. To prove Proposition \ref{proposition:7.4}, we can argue as
follows by applying a (constant) unitary transformation $U$ of order $p$ to the $p$-tuple of holomorphic functions $(g_1,\cdots,g_p)$.

Let
$\left(\tilde g_1,\cdots,\tilde g_p\right)$ be the image of $(g_1,\cdots,g_p)$ under the (constant) unitary transformation $U$ of order $p$.  For fixed $1\leq\lambda\leq n$ let
$\left(\tilde v^{1 \lambda},\cdots,\tilde v^{p \lambda}\right)\in{\mathbb C}^p$ be the image of $\left(v^{1 \lambda},\cdots,v^{p \lambda}\right)\in{\mathbb C}^n$ under the complex conjugate $\bar U$ of $U$ so that $\sum_{j=1}^p\tilde w_j\tilde v^{j \lambda}=\sum_{j=1}^p w_jv^{j \lambda}$ for any $(w_1,\cdots,w_p)\in{\mathbb C}$ whose image under $U$ is $\left(\tilde w_1,\cdots,\tilde w_p\right)$, because the image of
$\left(\overline{v^{1 \lambda}},\cdots,\overline{v^{p \lambda}}\right)$ under $U$ is $\left(\overline{\tilde v^{1 \lambda}},\cdots,\overline{\tilde v^{p \lambda}}\right)$.  Hence for fixed $1\leq\lambda\leq n$ we have
$$
\sum_{j,\ell=1}^p\overline{g_\ell}
\left(g_\ell(\partial_\lambda g_j) -g_j(\partial_\lambda
g_\ell)\right)v^{j \lambda}=\sum_{j,\ell=1}^p\overline{\tilde g_\ell}
\left(\tilde g_\ell(\partial_\lambda\tilde g_j) -\tilde g_j(\partial_\lambda
\tilde g_\ell)\right)\tilde v^{j \lambda}
$$
and summation over $1\leq\lambda\leq n$ yields
$$
\sum_{j,\ell=1}^p\sum_{\lambda=1}^n\overline{g_\ell}
\left(g_\ell(\partial_\lambda g_j) -g_j(\partial_\lambda
g_\ell)\right)v^{j \lambda}=\sum_{j,\ell=1}^p\sum_{\lambda=1}^n\overline{\tilde g_\ell}
\left(\tilde g_\ell(\partial_\lambda\tilde g_j) -\tilde g_j(\partial_\lambda
\tilde g_\ell)\right)\tilde v^{j \lambda}
$$
and
$$
\begin{aligned}&\frac{1}{\left(\sum_{j=1}^p|g_j|^2\right)^3}\left|\sum_{j,\ell=1}^p\sum_{\lambda=1}^n\overline{g_\ell}
\left(g_\ell(\partial_\lambda g_j) -g_j(\partial_\lambda
g_\ell)\right)v^{j\lambda}\right|^2\cr
&=
\frac{1}{\left(\sum_{j=1}^p|\tilde g_j|^2\right)^3}\left|\sum_{j,\ell=1}^p\sum_{\lambda=1}^n\overline{\tilde g_\ell}
\left(\tilde g_\ell(\partial_\lambda\tilde g_j) -\tilde g_j(\partial_\lambda
\tilde g_\ell)\right)\tilde v^{j\lambda}\right|^2.\cr
\end{aligned}
$$
Let $\tilde\Theta_{j\bar k\lambda\bar\nu}$ be the curvature tensor of the kernel vector subbundle when $(g_1,\cdots, g_p)$ is replaced by its image $(\tilde g_1,\cdots,\tilde g_p)$ under $U$.  Then
$$
\sum_{1\leq j,k\leq p,1\leq\lambda,\nu\leq n}\Theta_{j\bar k\lambda\bar\nu}v^{j\lambda}\overline{v^{k\nu}}
=\sum_{1\leq j,k\leq p,1\leq\lambda,\nu\leq n}\tilde\Theta_{j\bar k\lambda\bar\nu}\tilde v_{j\lambda}\overline{\tilde v^{k\nu}}
$$
for $\left(v^{k\lambda}\right)_{1\leq j\leq p,\,1\leq\lambda\leq n}$ satisfying $\sum_{j=1}^p g_jv^{j\lambda}=0$ for $1\leq\lambda\leq n$.
Hence at any prescribed point $P$ (where $g_1(P),\cdots,g_p(P)$ are not all zero) we can choose a (constant) unitary transformation $U$ of order $p$ so that the image $\left(\tilde g_1(P),\cdots,\tilde g_p(P)\right)$ of $(g_1(P),\cdots, g_p(P))$ satisfies the condition that $\tilde g_j(P)=0$ for $1\leq j\leq p-1$ and we can obtain the general case from the special case of $g_1(P)=\cdots=g_{p-1}(P)=0$ and $g_p(P)=1$.
\end{proof}

\begin{remark}  The arguments with inequalities centered around p.552 of \cite{Skoda1972} correspond to Proposition \ref{proposition:3.4.3} when the formula in Proposition \ref{proposition:7.4} is used.
\end{remark}

\bibliographystyle{amsalpha}

\end{document}